\definecolor{darkgreen}{rgb}{0,0.5,0}
\def\qed{\hfill$\square$}
\numberwithin{equation}{section}
\newtheorem{thm}[equation]{\sc Theorem}
\newtheorem{lem}[equation]{\sc Lemma}
\newtheorem{cor}[equation]{\sc Corollary}
\newtheorem{prop}[equation]{\sc Proposition}
\newtheorem{conj}[equation]{\sc Conjecture}
\newtheoremstyle{notation}{3pt}{3pt}{}{}{\itshape}{:}{.5em}{\thmname{#1}}
\theoremstyle{notation}
\newtheorem{notation}{\it Notation}
\newtheorem{rem}{\it Remark}
\newtheorem{defin}{\it Definition}
\newtheorem{ex}{\it Example}
\renewcommand{\@seccntformat }[1]{\csname the#1\endcsname. }
\newcommand{\sort}{{\sf sort}}
\def\longarr#1#2{{\buildrel{#1} \over {\hbox to #2pt{\rightarrowfill}}}}
\def\Hom{\mbox{\rm Hom}}
\def\vedge#1{{\buildrel{#1} \over {\hbox to
20pt{\hspace{-0.2em}$-$\hspace{-0.2em}$-$\hspace{-0.2em}$-$ }}}}
\newcounter{boxsize}
\newcounter{tempcounter}
\newcommand{\smallentryformat}{\scriptstyle\sf}
\newcommand\smbox{\put(0,0){\line(1,0){\value{boxsize}}}%
  \put(\value{boxsize},0){\line(0,1){\value{boxsize}}}%
  \put(0,0){\line(0,1){\value{boxsize}}}%
  \put(0,\value{boxsize}){\line(1,0){\value{boxsize}}}}
\newcommand\alphambox[1]{\put(0,0)\smbox%
  \put(0,0){\makebox(\value{boxsize},\value{boxsize})[c]{%
      $\smallentryformat#1$}}}
\newcommand\singlebox[1]{\raisebox{-.4ex}{\begin{picture}(4,0)\setcounter{boxsize}{3}%
    \put(0,0)\smbox%
    \put(0,0){\makebox(\value{boxsize},\value{boxsize})[c]{%
      $\scriptstyle\sf#1$}}\end{picture}}}
\def\alphatab#1#2#3#4#5#6{\begin{picture}(5,8)(0,3)
    \put(0,7){\alphambox{#1}}
    \put(3,7){\alphambox{#2}}
    \put(6,7){\alphambox{#3}}
    \put(0,4){\alphambox{#4}}
    \put(3,4){\alphambox{#5}}
    \put(0,1){\alphambox{#6}}
  \end{picture}}
\def\arr#1#2{\arrow <2mm> [0.25,0.75] from #1 to #2}
\newcommand\boxes[2]{\ifthenelse{#2=3}{$\scriptstyle P_2^{#1}$}{%
                                       $\scriptstyle P_{#2}^{#1}$}}
\begin{document}
\thispagestyle{empty}
\color{black}
\phantom m\vspace{-2cm}
%\noindent{\footnotesize[{\tt \ver,} \today]}

\bigskip\bigskip
\begin{center}
{\large\bf Two Partial Orders for {Standard Young Tableaux}} 
\end{center}

\smallskip

\begin{center}
Justyna Kosakowska, Markus Schmidmeier and Hugh Thomas
\footnote{The first named author is partially supported 
by Research Grant No.\ DEC-2011/ 02/A/ ST1/00216
of the Polish National Science Center.}
\footnote{This research is partially supported 
by a~Travel and Collaboration Grant from the Simons Foundation (Grant number
245848 to the second named author).}
\footnote{The third named author was partially supported by an
NSERC Discovery Grant and the Canada Research Chairs program.}
\vspace{1cm}

\bigskip \parbox{10cm}{\footnotesize{\bf Abstract:}
In this manuscript we show that two partial orders defined on the set
of standard Young tableaux  of shape $\alpha$ are equivalent.
In fact, we give two proofs for the equivalence of 
the box order and the dominance order for {tableaux}. Both are algorithmic.
The first of these proofs emphasizes links to the Bruhat order for the symmetric group
and the second provides a
more straightforward construction of the cover relations.
This work is motivated by the known result that the equivalence of the 
two combinatorial orders leads to a description of the geometry of 
the representation space of invariant subspaces of nilpotent linear
operators.
}

\medskip \parbox{10cm}{\footnotesize{\bf MSC 2010:} 
Primary: 05E10, %combinatorial aspects of representation theory
Secondary:
47A15  % (invariant subspaces)
}

\medskip \parbox{10cm}{\footnotesize{\bf Key words:} 
Littlewood-Richardson filling, 
Bruhat order, nilpotent linear operators}
\end{center}

%===================================================================
\section{Introduction}
%===================================================================

Let $\alpha$ be a~partition.
By $\mathcal T_\alpha$
we denote the set of all standard Young tableaux  (SYT) of
shape $\alpha$.
In Section~\ref{section-def} we define two partial orders 
$\leq_{\sf box}$ and $\leq_{\sf dom}$ on the set $\mathcal T_\alpha$. 
These orders are defined combinatorially and are of importance in the 
theory of invariant subspaces of nilpotent linear operators. They control 
the geometry  of varieties of
invariant subspaces of nilpotent linear operators, 
as they describe the degeneration relation and the boundaries of the
irreducible components,
see \cite{ks-hall,ks-survey,ks-strips} and Section \ref{sec-motivation}. Therefore,
it is important to investigate properties of these orders. One of the main results
of  the paper is the following theorem.

\begin{thm}
  Let $X$, $Z$ be SYTs
  of the same shape.
  The following conditions are
  equivalent.
  \begin{enumerate}
  \item  $Z\leq_{\sf box}X$,
  \item  $Z\leq_{\sf dom}X$.
  \end{enumerate}
  \label{thm-main}
\end{thm}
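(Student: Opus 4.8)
The plan is to prove the two implications separately, treating $\leq_{\sf box}$ as the order generated by elementary box moves and $\leq_{\sf dom}$ as a dominance condition on the chain of partitions $\gamma=\gamma^{(0)}\subseteq\gamma^{(1)}\subseteq\cdots\subseteq\beta$ associated to a filling, where $\gamma^{(i)}$ records the cells filled with entries at most $i$. The implication $(1)\Rightarrow(2)$ should be the routine direction: since $\leq_{\sf box}$ is by definition generated by single box moves, it suffices, by transitivity, to check that one box move $X\to X'$ already forces $X'\leq_{\sf dom}X$. I would verify this directly from the definitions, showing that a box move only shifts a single entry in a way that cannot increase any of the partial-sum statistics controlling the dominance order, while the lattice (LR) condition guarantees the move stays within $\mathcal T^\beta_{\alpha,\gamma}$.

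For the converse $(2)\Rightarrow(1)$ I would argue by induction on a suitable measure of the dominance gap between $X$ and $Z$, for instance the total discrepancy of the two partition chains. The inductive step reduces to the following key lemma: if $Z<_{\sf dom}X$ then there is a box move applicable to $X$ producing $X'$ with $Z\leq_{\sf dom}X'<_{\sf box}X$. To locate such a move I would compare the two chains, find the smallest index $i$ at which they differ, and use the dominance inequality there to pinpoint a cell of $X$ whose entry can be legally lowered or moved without passing below $Z$. The horizontal-and-vertical-strip hypothesis enters decisively here: because no two cells share a row or a column, the row- and column-monotonicity constraints become vacuous, so the only obstruction to a move is the lattice word condition, and the strip geometry leaves enough room to keep it satisfied.

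The first proof would make this precise through the Bruhat order: I would set up a bijection between the LR-fillings of a fixed horizontal-and-vertical-strip shape and a set of combinatorial objects (lattice words, equivalently certain coset representatives) on which the symmetric-group Bruhat order is defined, arrange that $\leq_{\sf dom}$ corresponds to Bruhat order and that box moves correspond to the covering transpositions, and then invoke the classical fact that Bruhat order is exactly the order generated by those covers staying below a fixed element. The second, more self-contained proof would instead carry out the construction of the box move in the key lemma by hand. I expect this key lemma --- producing a \emph{valid} box move that strictly decreases $X$ in the box order yet does not overshoot below $Z$ in the dominance order --- to be the main obstacle, since a naive greedy decrease risks either breaking the lattice condition or dropping strictly below $Z$; reconciling these two constraints is precisely where the strip hypothesis and the Bruhat-order dictionary do the real work.
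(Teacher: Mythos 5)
Your overall architecture matches the paper's: the easy direction is a one-move check (the paper's Lemma~\ref{lem-box-to-dom}), and the converse rests on the key lemma that $Z<_{\sf dom}X$ forces the existence of an intermediate filling $Y$ with $Z\leq_{\sf dom}Y<_{\sf box}X$, established either through the Bruhat order or by a direct algorithm --- exactly the paper's two proofs. But the proposal has a genuine gap at precisely the point you yourself flag as the main obstacle. In the Bruhat route you plan to ``invoke the classical fact that Bruhat order is exactly the order generated by those covers staying below a fixed element.'' That chain property holds in $S_n$ (and in parabolic quotients such as $S^\alpha$), but it is not enough here: the chain of covers must stay inside the set $L$ of permutations whose $\alpha$-recording tableaux are standard, i.e.\ which correspond to \emph{lattice} fillings, and $L$ is a proper subset of the parabolic quotient $S^\alpha$ that is not itself any standard subposet for which a chain property is known. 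The paper's Proposition~\ref{prop-bruhat2} --- if $x<z$ in Bruhat order with $x,z\in L$, then some cover $x\lessdot y$ satisfies $y\in L$ and $y\leq z$ --- is exactly this missing statement, and its proof is the bulk of the paper's first argument: one chooses the subword of a reduced expression for $z$ representing $x$ so that the leftmost omitted reflection is as far right as possible, and then verifies latticeness of $y$ by analyzing which boxes of the recording tableau are added between the two swapped entries, including separately ruling out that the two boxes are adjacent in a row or in a column (the column case uses $z\in L$). None of this follows from citing Bj\"orner--Brenti alone. (Note also that the dictionary is order-\emph{reversing}: $Z<_{\sf dom}X$ iff $\pi(X)<\pi(Z)$ in Bruhat order.)

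The same gap appears in your second, hands-on route: you say you would ``pinpoint a cell of $X$ whose entry can be legally lowered or moved without passing below $Z$,'' but exhibiting that cell is the entire difficulty. The paper's second proof gives a specific recipe: reading the fillings as column words, let $k$ be the first position where $\omega(X)$ and $\omega(Z)$ differ, let $a=\omega(Z)_k$, let $m>k$ be the first later position with $\omega(X)_m=a$, and let $b$ be the minimal entry exceeding $a$ among positions $k,\dots,m-1$; swapping this $b$ with the $a$ in position $m$ is then shown to preserve the lattice property (using that no entry in $\{a,\dots,b-1\}$ occurs strictly between) and to keep the result above $Z$ in dominance, via a positivity comparison of the counting matrices $(\#\omega(\cdot)_{\leq q}^{\leq u})_{q,u}$. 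Your correct observation that the rook-strip hypothesis makes row- and column-monotonicity vacuous explains why only the lattice condition matters, but it does not by itself produce the move. As it stands, the proposal is a correct plan whose decisive step --- in either route --- is asserted rather than proved.
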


We show in \cite{ks-strips} that several other relations of geometric
or of algebraic nature lie between the box and the dominance relations.
If those two are equal, then all the relations coincide.

 We remark that partial orders on standard Young tableaux have
  been considered elsewhere, such as, notably, 
  by Ta\c{s}kin in \cite{tas}. Note, though, that the posets defined there are
  defined on all SYTs of size
  $n$ at once.  In fact, for three of the partial orders considered there
  (the weak, KL, and
  geometric orders), two non-equal fillings of the same shape are
  always incomparable \cite[Proposition 3.5]{tas}, so the restriction to
  SYT of fixed shape is not interesting.  The order which Ta\c{s}kin refers
  to as chain order is similar to, but strictly weaker than, our
  dominance order (after transposition).

We present two different proofs of Theorem~\ref{thm-main}. 
Both proofs are constructive. 
The first one, presented in Section \ref{section-first-proof},
shows connections of our problem with the Bruhat order in the symmetric group $S_n$
where $n=|\alpha|$.

The second proof, given in Section 
\ref{section-algorithm}, gives a~more
straightforward algorithm 
that applies operations on entries of SYTs. Given two SYTs such that 
$Z\leq_{\sf dom} X$, both 
algorithms compute a~sequence
of box moves that convert $X$ to $Z$. This proves that
$Z\leq_{\sf box} X$.

%\begin{red}Both algorithms depend on choices.  We show in Section~\ref{sec-comparison} 
%that for suitable choices, the two algorithms produce the same sequence of box moves.
%\end{red}

Finally,
in Section \ref{sec-prop}, we describe some properties of the poset 
$(\mathcal T_\alpha,\leq_{\sf box})$.
We prove that 
there exists exactly one minimal and exactly one maximal element, 
and that all saturated chains have the same length.

%===================================================================
\section{Definitions and notation}
\label{section-def}
%===================================================================

\medskip

Following \cite{fult,macd} we recall definitions and notation connected with SYTs.

\begin{notation}
Recall that a~{\it partition} $\alpha=(\alpha_1,\ldots, \alpha_s)$
is a~finite non-increasing sequence of natural numbers;
we picture $\alpha$ by its Young diagram which consists of $s$ rows of
lengths given by the parts of $\alpha$. {Let $|\alpha|=\alpha_1+\ldots+\alpha_s$.}
 The {\it transpose} $\alpha'$ of $\alpha$ is given by the formula
 $$\alpha'_j=\#\{i:\alpha_i\geq j\},$$
 it is pictured by the transpose of the Young diagram for $\alpha$.
Two partitions $\alpha$, $\widetilde{\alpha}$ are in the {\it dominance partial order,}
  in symbols $\alpha\leq_{\sf dom}\widetilde{\alpha}$, if the  inequality 
  $$\alpha_1+\cdots+\alpha_j\leq\widetilde{\alpha}_1+\cdots+\widetilde{\alpha}_j$$
  holds for each $j$.
\end{notation}
\smallskip

 \begin{notation} Let $\alpha$ be a~partition.
 {\it A~standard Young tableau (SYT)} of shape $\alpha$ is a~tableau of shape $\alpha$
 filled by the numbers $1,2,\ldots,|\alpha|$
 such that its entries increase along rows and down columns. Let $\mathcal{T}_\alpha$ be the set of all 
 SYTs of shape $\alpha$.
 
Given a~SYT $X$ we number its rows starting from the top and going down.
\end{notation}

\begin{ex}
Let $\alpha=(2,2,1)$. There are exactly five SYTs of shape $\alpha$:
$$ \begin{picture}(5,8)(0,3)
%\multiput(0,12)(3,0)4{\smbox}
\put(0,7){\alphambox{1}}
\put(3,7){\alphambox{2}}
\put(0,4){\alphambox{3}}
\put(3,4){\alphambox{4}}
\put(0,1){\alphambox{5}}
%\multiput(0,9)(3,0)3{\smbox}
\end{picture} 
\;\;\;\;\;\;\;\;
\begin{picture}(5,8)(0,3)
%\multiput(0,12)(3,0)4{\smbox}
\put(0,7){\alphambox{1}}
\put(3,7){\alphambox{2}}
\put(0,4){\alphambox{3}}
\put(3,4){\alphambox{5}}
\put(0,1){\alphambox{4}}
%\multiput(0,9)(3,0)3{\smbox}
\end{picture}
\;\;\;\;\;\;\;\;
\begin{picture}(5,8)(0,3)
%\multiput(0,12)(3,0)4{\smbox}
\put(0,7){\alphambox{1}}
\put(3,7){\alphambox{3}}
\put(0,4){\alphambox{2}}
\put(3,4){\alphambox{4}}
\put(0,1){\alphambox{5}}
%\multiput(0,9)(3,0)3{\smbox}
\end{picture}
\;\;\;\;\;\;\;\;
\begin{picture}(5,8)(0,3)
%\multiput(0,12)(3,0)4{\smbox}
\put(0,7){\alphambox{1}}
\put(3,7){\alphambox{3}}
\put(0,4){\alphambox{2}}
\put(3,4){\alphambox{5}}
\put(0,1){\alphambox{4}}
%\multiput(0,9)(3,0)3{\smbox}
\end{picture}
\;\;\;\;\;\;\;\;
\begin{picture}(5,8)(0,3)
%\multiput(0,12)(3,0)4{\smbox}
\put(0,7){\alphambox{1}}
\put(3,7){\alphambox{4}}
\put(0,4){\alphambox{2}}
\put(3,4){\alphambox{5}}
\put(0,1){\alphambox{3}}
%\multiput(0,9)(3,0)3{\smbox}
\end{picture}
 $$ 
In the first tableau the entries $1$ and $2$ are in the row number $1$; the entries $3$, $4$  in the row number $2$, etc. 
\end{ex}

\begin{notation}
One can represent a~SYT $X$ by a~sequence of partitions

 $$X=[\gamma^{(1)},\ldots,\gamma^{(|\alpha|)}]$$
% (important to have this %, so the paragraph continues with the next line)
where 
$\gamma^{(i)}$ denotes the region in the Young diagram  {$\alpha$}
which contains the entries
$\singlebox{1}$, $\ldots$, $\singlebox{i}$.
\end{notation}

\smallskip
In the example above, the first filling is given by the sequence of partitions

$X=[(1),(2),(2,1),(2,2),(2,2,1)]$.

\medskip
We introduce two partial orders on the set 
{$\mathcal T_\alpha$ of all SYTs of 
the same shape}. 

\begin{defin}
Two 
SYTs $Z=[\delta^{(1)},\ldots,\delta^{(|\alpha|)}]$, 
$X=[\gamma^{(1)},\ldots,\gamma^{(|\alpha|)}]$ of the same shape are
{\it related in the dominance partial order,}
in symbols $Z\leq_{\sf dom} X$,
if for each $i$, the corresponding partitions 
$\delta^{(i)}$, $\gamma^{(i)}$ are related in the dominance partial order, i.e. 
$\delta^{(i)}\leq_{\sf dom}\gamma^{(i)}$.
\end{defin}

In the example above of SYTs of shape $(2,2,1)$, the first one
  listed is the largest in dominance order, and the last one is the
  smallest,  while, for example, the second and  third are incomparable
    in dominance order.

\begin{defin}
Suppose $X$, $Z$ are SYTs of the same shape.
We say $Z$ is obtained from $X$ by {\it a~decreasing box move} if
$Z$ is obtained by swapping two entries in $X$ so that the smaller entry
  winds up in  the higher-numbered row.  
We denote by $\leq_{\sf box}$ the partial order generated by box moves.
\end{defin}

Here is an~example:
$$
Z:\quad\begin{picture}(5,8)(0,3)
%\multiput(0,12)(3,0)4{\smbox}
\put(0,7){\alphambox{1}}
\put(3,7){\alphambox{4}}
\put(0,4){\alphambox{2}}
\put(3,4){\alphambox{5}}
\put(0,1){\alphambox{3}}
%\multiput(0,9)(3,0)3{\smbox}
\end{picture} 
\;\;\;\;\;\;\;\;
<_{\sf box}
\;\;\;\;\;\;\;\;
X:\quad\begin{picture}(5,8)(0,3)
%\multiput(0,12)(3,0)4{\smbox}
\put(0,7){\alphambox{1}}
\put(3,7){\alphambox{2}}
\put(0,4){\alphambox{3}}
\put(3,4){\alphambox{4}}
\put(0,1){\alphambox{5}}
%\multiput(0,9)(3,0)3{\smbox}
\end{picture}
$$
To get $Z$ we apply to $X$ the following sequence of moves.
First we exchange $\singlebox 2$ and $\singlebox 3$, then
$\singlebox 4$ and $\singlebox5$
and finally $\singlebox 3$ and $\singlebox 4$.
\medskip

We finish this section by establishing the following fact.

\begin{lem}\label{lem-box-to-dom}
For SYTs of the same shape, the $\leq_{\sf box}$-order 
implies the $\leq_{\sf dom}$- order.
\end{lem}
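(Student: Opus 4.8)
The plan is to reduce the statement to the effect of a \emph{single} box move and then to track, level by level, what that move does to the sequence of partitions. Since $\leq_{\sf box}$ is by definition the partial order generated by box moves and $\leq_{\sf dom}$ is transitive, it suffices to prove that if $Z=[\delta^{(0)},\dots,\delta^{(s)}]$ is obtained from $X=[\gamma^{(0)},\dots,\gamma^{(s)}]$ by one decreasing box move, then $\delta^{(i)}\leq_{\sf dom}\gamma^{(i)}$ for every $i$. Fix the two exchanged entries and call their values $a<b$. Because the move is decreasing, the smaller value $a$ occupies the upper cell $P$ (row $r_1$) before the exchange and the lower cell $Q$ (row $r_2>r_1$) afterwards. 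For a valid LR-filling the shape $\gamma^{(i)}$ is exactly the region of cells with value $\le i$, and its $j$-th part is the number of such cells in row $j$; I will use this description throughout.

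The first observation is that only the levels $a\le i<b$ are affected: for $i<a$ or $i\ge b$ the exchange moves no cell across the threshold $i$, so the set of cells with value $\le i$ is literally unchanged and $\delta^{(i)}=\gamma^{(i)}$. For the remaining levels I want to show that the net effect of the move (exchange followed by re-sorting) is to displace a single cell of $\gamma^{(i)}$ to a strictly lower row, from which dominance follows by the elementary partition fact that moving one cell of a partition from row $p$ to a row $q>p$ leaves all prefix sums $\sum_{j\le k}(\cdot)_j$ weakly smaller, which is precisely $\leq_{\sf dom}$.

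The subtlety, and the step I expect to be the crux, is that the re-sorting can shuffle entries between rows, so the net change of $\gamma^{(i)}$ is not \emph{visibly} a single downward cell move. The key is to isolate which re-sorting is actually needed in each strip case and to exploit that it preserves a multiset. For a horizontal strip each column carries at most one entry, so columns stay strictly increasing automatically and only row re-sorting is required; row re-sorting permutes entries within a row and hence preserves, for every $i$, the number of cells with value $\le i$ in each row. The raw exchange lowers that count in row $r_1$ by one and raises it in row $r_2$ by one, so $\sum_{j\le k}\delta^{(i)}_j=\sum_{j\le k}\gamma^{(i)}_j-[\,r_1\le k<r_2\,]\le\sum_{j\le k}\gamma^{(i)}_j$, giving $\delta^{(i)}\leq_{\sf dom}\gamma^{(i)}$ at once.

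The vertical-strip case is symmetric but needs one geometric input. Now each row carries at most one entry, rows are automatically weakly increasing, and only column re-sorting is required; column re-sorting preserves, for every $i$, the number of cells with value $\le i$ in each column, that is, the transpose partition $(\gamma^{(i)})'$. The exchange lowers this count in the column $c_P$ of $P$ by one and raises it in the column $c_Q$ of $Q$ by one. In a vertical strip the single skew cell of row $j$ sits in column $\beta_j$, and since $\beta$ is weakly decreasing the columns of the skew cells weakly decrease as one goes down; as $r_1<r_2$ this forces $c_Q\le c_P$. Hence every prefix sum of $(\gamma^{(i)})'$ only grows under the move, i.e.\ $(\gamma^{(i)})'\leq_{\sf dom}(\delta^{(i)})'$, which by the fact that transposition reverses the dominance order is again $\delta^{(i)}\leq_{\sf dom}\gamma^{(i)}$. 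Combining the two cases with the transitivity reduction completes the argument; the real content is the bookkeeping of the re-sorting together with the small monotonicity observation $c_Q\le c_P$ that makes the vertical case go through.
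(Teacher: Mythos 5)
Your argument is correct and takes essentially the same route as the paper's proof: reduce to a single decreasing box move, observe that only the partitions $\gamma^{(i)}$ with $a\le i<b$ can change, and verify that the partial sums defining $\leq_{\sf dom}$ move weakly in the right direction. The paper compresses that last step into one line (``the defining partial sums can only increase''), while you make the re-sorting bookkeeping explicit via the horizontal/vertical strip dichotomy and the transpose argument --- a more detailed write-up of the same idea rather than a different proof.
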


\begin{proof}
Suppose that the {SYT}
{$Z=[\delta^{(1)},\ldots,\delta^{(|\alpha|)}]$}
is obtained from 
{$X=\allowbreak[\gamma^{(1)},\ldots,\gamma^{(|\alpha|)}]$}
by a~decreasing box move based on swapping 
entries $a$ and $b$ with, say, $a<b$.
{Note that} the partitions $\gamma^{(1)},\ldots,\gamma^{(a-1)}$,
and $\gamma^{(b)},\ldots,\gamma^{(|\alpha|)}$ remain unchanged.
The partitions $\delta^{(\ell)}$, $\gamma^{(\ell)}$ for $a\leq \ell<b$ 
are different and satisfy $\delta^{(\ell)}<_{\sf dom}\gamma^{(\ell)}$ 
(since the defining partial sums can only  decrease).  
This shows that $Z<_{\sf dom} X$.
\end{proof}

\section{{LR-fillings and} motivation}
%===================
\label{sec-motivation}

  Our investigation of partial orders for  SYTs is motivated by
  its connections with {LR-fillings} and an application to short exact sequences of nilpotent linear operators.

  Fix two partitions $\gamma\subseteq\beta$
such that the Young diagram for $\gamma$ is contained
in the Young diagram for $\beta$. 
The skew diagram  $\beta\setminus\gamma$ 
is said to be a~{\it vertical strip} if $\beta_i\leq \gamma_i+1$ 
holds for all $i$, and a~{\it horizontal strip} if $\beta'\setminus\gamma'$
is a~vertical strip. A~{\it rook strip} is a horizontal and vertical strip.

Given three partitions $\alpha=(\alpha_1,\ldots, \alpha_s)$, $\beta$, $\gamma$,
we will consider fillings of $\beta\setminus\gamma$ which have
$\alpha_1$ entries $1$'s, $\alpha_2$ entries $2$'s, etc.
We describe such a~filling as having the {\it content} $\alpha$
and the {\it shape} $\beta\setminus\gamma$.
  The {\it type} of the filling,
  $(\alpha,\beta,\gamma)$, records the content and shape.
A~filling is said to be
an {\it LR-filling} if the following three conditions are satisfied:
\begin{itemize}
\item in each row, the entries are weakly increasing,
\item in each column, the entries are strictly increasing,
\item ({\it lattice permutation property}) for each $u>1$ and for each column $c$:
  on the right hand side of $c$, the number of entries $u-1$
  is at least the number of entries $u$.
\end{itemize}
    
      The number of LR-fillings of type $(\alpha,\beta,\gamma)$ is the Littlewood-Richardson coefficient $c_{\alpha,\gamma}^\beta$ which plays an important r\^ole
      in symmetric functions, Schubert calculus, and the representation theory of the general linear group,  see \cite{fult,macd}. 

\smallskip

\begin{ex}  Let $\alpha=(2,2,1)$, $\beta=(4,3,3,2,1)$, $\gamma=(3,2,2,1)$.
We have to fill the skew
diagram $\beta\setminus\gamma$ with two $\singlebox1$'s, two $\singlebox2$'s, 
and one $\singlebox3$.
Due to the conditions on an LR-filling, this can be done in exactly three ways.

$$ \setcounter{boxsize}{3}
\begin{picture}(18,12)(0,2)
\multiput(0,12)(3,0)4{\smbox}
\put(0,0){\alphambox{2}}
\multiput(0,9)(3,0)3{\smbox}
\put(3,3){\alphambox{1}}
\put(6,6){\alphambox{3}}
\multiput(0,6)(3,0)3{\smbox}
\multiput(0,3)(3,0)2{\smbox}
\put(6,9){\alphambox{2}}
\put(9,12){\alphambox{1}}
\end{picture}
\;\;\;\;\;\;\;\;
\setcounter{boxsize}{3}
\begin{picture}(18,12)(0,2)
\multiput(0,12)(3,0)4{\smbox}
\put(0,0){\alphambox{2}}
\multiput(0,9)(3,0)3{\smbox}
\put(3,3){\alphambox{3}}
\put(6,6){\alphambox{2}}
\multiput(0,6)(3,0)3{\smbox}
\multiput(0,3)(3,0)2{\smbox}
\put(6,9){\alphambox{1}}
\put(9,12){\alphambox{1}}
\end{picture}
\;\;\;\;\;\;\;\;
\setcounter{boxsize}{3}
\begin{picture}(18,12)(0,2)
\multiput(0,12)(3,0)4{\smbox}
\put(0,0){\alphambox{3}}
\multiput(0,9)(3,0)3{\smbox}
\put(3,3){\alphambox{2}}
\put(6,6){\alphambox{2}}
\multiput(0,6)(3,0)3{\smbox}
\multiput(0,3)(3,0)2{\smbox}
\put(6,9){\alphambox{1}}
\put(9,12){\alphambox{1}}
\end{picture}
$$
In this example, $\beta\setminus\gamma$ is~a~vertical
but not a~horizontal strip.
\end{ex}

 %\begin{rem}
 %\begin{enumerate}
%\item
  Let $\beta\setminus \gamma$ be a~rook strip. Note that there is a~bijection
   between the set $\mathcal{T}_{\alpha,\gamma}^\beta$ of LR-fillings of type $(\alpha,\beta,\gamma)$
   and the set $\mathcal{T}_\alpha$ of the SYTs of shape $\alpha$. A~bijection
   $\Phi^\beta_\gamma:\mathcal{T}_{\alpha,\gamma}^\beta\to \mathcal{T}_\alpha$ is constructed as follows.
   For an LR-filling $X$ we denote by $\tau(X)=(\tau_1,\ldots,\tau_{|\alpha|})$ the
 list of entries when
 reading columns from the top down, starting with the rightmost
  column and moving left. Fix $i\leq s$,
  where $s$ is
  the number of rows of $\alpha$. Let $j_1<j_2<\ldots<j_{n_i}$ be all elements $j$ such that
  $\tau_j=i$.  We write the 
  elements $j_1,j_2,\ldots,j_{n_i}$
  in the $i$-th row of the corresponding SYT of shape $\alpha$.

  Box order and dominance order were defined on LR-fillings in \cite{ks-strips}.
  We will review these definitions, and verify that they agree under the
  bijection defined above with the orders we have already defined on
  standard tableaux.
  
  %\item
   The box order is defined on LR-fillings of type $(\alpha,\beta,\gamma)$
    by saying that the filling $Z$ is obtained by a {\it decreasing
    box move} from the filling
    $X$ if $Z$ is obtained from $X$ by swapping two entries, such that the
    smaller entry winds up in the lower position.

    \begin{lem} For LR-fillings $X,Z$ of type $(\alpha,\beta,\gamma)$
      we have that $\Phi^\beta_\gamma(X)\geq_{\sf box} \Phi^\beta_\gamma({Z})$ if and
      only if $X\geq_{\sf box} Z$. \end{lem}

    \begin{proof} 
      
      Let $X,Z$ be LR-fillings of type $(\alpha,\beta,\gamma)$.
      Suppose we obtain $Z$ from $X$ by swapping the positions of entries
      $\tau_i$ and $\tau_j$ with $i<j$, and in $X$ we have
      $\tau_i=a, \tau_j=b$, with $a<b$.

      Now consider $\Phi^\beta_\gamma(X)$ and $\Phi^\beta_\gamma(Z)$. In
      $\Phi^\beta_\gamma(X)$, we have $i$ in row $a$ and $j$ in row $b$.
      By the definition of $\Phi^\beta_\gamma$, we see that
      we obtain $\Phi^\beta_\gamma(Z)$ from $\Phi^\beta_\gamma(X)$
      by swapping the entries $i$ and $j$
      and resorting the two rows if necessary so that they are increasing.
      This resorting step is not allowed in our
      definition of decreasing box moves for SYTs.

      However, it turns out that there is a sequence of legal
      decreasing box moves 
      which suffice to transform $\Phi^\beta_\gamma(X)$ into $\Phi^\beta_\gamma(Z)$.

      Let $I$ be the entries of row $a$ of $\Phi^\beta_\gamma(X)$ which are weakly
      between $i$ and $j$, and similarly let $J$ be the entries of row $b$
      of $\Phi^\beta_\gamma(X)$ which are weakly between $i$ and $j$.
      The proof
      is by induction on $|I|$.

      The base case is when $|I|=1$, in which case $I=\{i\}$. In this case,
      if $J=\{j_r<j_{r-1}<\dots<j_1=j$, perform the following sequence of
      swaps:
      $$(i,j_r),(j_r,j_{r-1}), \dots (j_2,j_1)$$
      This has the effect of swapping $i$ and $j$ and resorting row $b$.

      Now suppose that $|I|=p>1$. Let the largest element of $I$ be $i_p$,
      and let $J'=\{j_r<\dots<j_2<j_1=j\}$
      be the elements of row $b$ which are weakly between
      $i_p$ and $j$. Perform the same sequence of swaps as before using
      $i_p$ and $J'$ in place of $i$ and $J$.  This has the effect of swapping $i_p$ and $j$, and
      resorting row $b$ if necessary. To complete the desired effect, it suffices to swap $i$ and $i_p$ (which is now in row $b$). This is another instance of the same problem,
      but with the original $i_p$ having been removed from $I$. We are therefore done by induction.

      The fact that all the tableaux we pass through are indeed standard,
      follows from the fact that the starting and ending tableaux are
      standard by hypothesis.

      The converse direction is obvious: if there is a decreasing box move
      from $\Phi^\beta_\gamma(X)$ to $\Phi^\beta_\gamma(Z)$, then the corresponding
      move from $X$ to $Z$ is a decreasing box move.
    \end{proof}

     For an example of the phenomenon considered in the previous proof, consider the following LR-fillings, which are related by the box move swapping the entries in rows 3 and 6.
$$\raisebox{5mm}{$Z:$} \quad
\begin{picture}(18,15)(0,3)
\multiput(0,15)(3,0)5{\smbox}
\put(15,15){\alphambox{1}}
\multiput(0,12)(3,0)4{\smbox}
\put(12,12){\alphambox{1}}
\multiput(0,9)(3,0)3{\smbox}
\put(9,9){\alphambox{2}}
\multiput(0,6)(3,0)2{\smbox}
\put(6,6){\alphambox{2}}
\put(0,3){\smbox}
\put(3,3){\alphambox{3}}
\put(0,0){\alphambox{1}}
\end{picture}
\quad\raisebox{5mm}{$<_{\sf box}$} \;\;\;
\raisebox{5mm}{$X:$} \quad
\begin{picture}(18,15)(0,3)
\multiput(0,15)(3,0)5{\smbox}
\put(15,15){\alphambox{1}}
\multiput(0,12)(3,0)4{\smbox}
\put(12,12){\alphambox{1}}
\multiput(0,9)(3,0)3{\smbox}
\put(9,9){\alphambox{1}}
\multiput(0,6)(3,0)2{\smbox}
\put(6,6){\alphambox{2}}
\put(0,3){\smbox}
\put(3,3){\alphambox{3}}
\put(0,0){\alphambox{2}}
\end{picture}
$$

The corresponding tableaux are

$$
 \raisebox{1.5mm}{$\Phi^\beta_\gamma(Z):$} \quad \alphatab 126345 \;\;\;\;\quad  \quad \;\;
  \raisebox{1.5mm}{$\Phi^\beta_\gamma(X):$} \quad \alphatab 123465
  $$

  Swapping 3 and 6 is not a legal decreasing box move starting from
  $\Phi^\beta_\gamma(X)$, but we can
  first swap 3 and 4, then 4 and 6.

 Similarly, one can define {\it dominance order} on the set of
  LR-fillings.
  One can represent an LR-filling $X$ by a~sequence of partitions
$$X=[\gamma^{(0)},\ldots,\gamma^{(s)}]$$ 
where $s$ is the number of rows of $\alpha$ and
$\gamma^{(i)}$ denotes the region in the Young diagram $\beta$
which contains the entries
$\singlebox{}$, $\singlebox{1}$, $\ldots$, $\singlebox{i}$.
 If  $X$ has type $(\alpha,\beta,\gamma)$, then $\gamma=\gamma^{(0)}$,
$\beta=\gamma^{(s)}$, and $\alpha_i=|\gamma^{(i)}\setminus\gamma^{(i-1)}|$ 
 for $i=1,\ldots,s$.  For LR-fillings, we define $X\geq_{\sf dom}Z$
   to mean that for all $1\leq i \leq s$, the $i$-th partition corresponding
   to $X$ is greater than or equal to the $i$-th partition corresponding to $Z$
 in dominance order.

 We then have the following lemma:

  \begin{lem} For LR-fillings $X,Z$ of type $(\alpha,\beta,\gamma)$
    we have $\Phi^\beta_\gamma(X)\geq_{\sf dom} \Phi^\beta_\gamma(Z)$ if and only
    if $X\geq_{\sf dom} Z$. \end{lem}

  \begin{proof} Suppose that $\alpha$ has $s$ rows.  For $1\leq r \leq s$,
    write $\Phi^\beta_\gamma(X)|_{\leq r}$ for the first $r$ rows of $\Phi^\beta_\gamma(X)$.  For $x$ a word, write $\sort (x)$ for the result of sorting the letters
    of $x$ into increasing order.  We can similarly apply $\sort$ to a tableau; the result is again just the word obtained by sorting the entries of the tableau into increasing order.  
    
    $X\geq_{\sf dom} Z$ is equivalent to saying that, for each $1\leq r\leq s$,
    we have
    $\sort(\Phi^\beta_\gamma(X)|_{\leq r}) \leq \sort(\Phi^\beta_\gamma(Z)|_{\leq r})$,
    where the comparison is done coordinatewise.

   Let $\delta^{(i)}$ be the sequence of partitions corresponding to
    $\Phi^\beta_\gamma(X)$, and let $\eta^{(i)}$ be the sequence of partitions
    corresponding to $\Phi^\beta_\gamma(Z)$.  
    
    The condition that, for a fixed $r$, we have $\sort(\Phi^\beta_\gamma(X)|_{\leq r}) \leq \sort(\Phi^\beta_\gamma(Z)|_{\leq r})$, is equivalent to saying that, for
    all $i$, the sum of the first $r$ parts of $\delta^{(i)}$ is greater than
    or equal to the sum of the first $r$ parts of $\eta^{(i)}$.  This condition
    for all $r$ is exactly the definition of $\Phi^\beta_\gamma(X) \geq_{\sf dom} \Phi^\beta_\gamma(Z)$. 
\end{proof} 
  
  %\end{enumerate}
 %\end{rem}

  Let $k$ be an algebraically closed field.
  A nilpotent $k$-linear operator $N=(V,T)$ consists of
  a~finite dimensional $k$-vector space
  $V$ together with a nilpotent $k$-linear map $T:V\to V$.
  Such an operator $N_\alpha=(V,T)$ is given uniquely,
  up to isomorphy, by a partition $\alpha$
  recording the sizes of the Jordan blocks of the action of $T$ on
  the vector space $V$.
  Given two nilpotent linear operators $N=(V,T)$ and $N'=(V',T')$, 
  a morphism from $N$ to $N'$ is a $k$-linear map
  $\phi:V\rightarrow V'$ such that $T'\phi=\phi T$.

  The Green-Klein Theorem \cite{gk} establishes
  the link with LR-fillings:\medskip

  \begin{thm}
    For partitions $\alpha,\beta,\gamma$, there exists a short exact sequence
    $0\to N_\alpha\to N_\beta\to N_\gamma\to 0$ of nilpotent linear operators
    if and only if there is an LR-filling of
    type $(\alpha',\beta',\gamma')$. \qed
  \end{thm}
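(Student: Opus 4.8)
The plan is to translate the statement into the language of finite-length modules and then match the module-theoretic invariants with the combinatorics of LR-fillings. A nilpotent operator $N_\alpha=(V,T)$ is the same datum as a finite-dimensional torsion module over the principal ideal domain $k[t]$, namely $M_\alpha=\bigoplus_i k[t]/(t^{\alpha_i})$, and a short exact sequence $0\to N_\alpha\to N_\beta\to N_\gamma\to 0$ is exactly an embedding $A=M_\alpha\hookrightarrow M_\beta=B$ with cokernel $C=M_\gamma$. The transpose enters through the graded dimensions: for any such module $\dim_k t^{i-1}M/t^iM=\#\{j:\alpha_j\ge i\}=\alpha'_i$. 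Before constructing anything I would record the reduction that makes the primes harmless: the number of LR-fillings of type $(\alpha',\beta',\gamma')$ equals $c^{\beta'}_{\alpha'\gamma'}$, and transpose symmetry $c^{\lambda'}_{\mu'\nu'}=c^{\lambda}_{\mu\nu}$ together with $c^\beta_{\alpha\gamma}=c^\beta_{\gamma\alpha}$ shows that such a filling exists if and only if $c^{\beta}_{\alpha\gamma}>0$. Thus it suffices to prove that a short exact sequence of type $(\alpha,\beta,\gamma)$ exists if and only if $c^\beta_{\alpha\gamma}>0$.

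For the implication from a sequence to a filling I would extract the filling from the $t$-adic filtration of the submodule. Given $A\subseteq B$ with $B/A\cong C$, consider the decreasing chain $A\supseteq tA\supseteq\cdots\supseteq t^{\alpha_1}A=0$ and the induced quotients $B\twoheadrightarrow B/tA\twoheadrightarrow\cdots\twoheadrightarrow B/A$. Since each kernel $t^{i-1}A/t^iA$ is killed by $t$ and has dimension $\alpha'_i$, every step is an extension of $B/t^{i-1}A$ by a semisimple module, and the module-theoretic Pieri rule forces $\lambda^{(i)}:=\bigl(\type(B/t^iA)\bigr)'$ to form a chain $\gamma'=\lambda^{(0)}\subseteq\cdots\subseteq\lambda^{(\alpha_1)}=\beta'$ in which $\lambda^{(i)}\setminus\lambda^{(i-1)}$ is a horizontal strip of size $\alpha'_i$. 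This is exactly the partition-sequence form of a candidate LR-filling of type $(\alpha',\beta',\gamma')$; the row- and column-monotonicity are immediate from the horizontal-strip property, so the one genuinely nontrivial point is the lattice permutation (Yamanouchi) condition. I expect this to be the main obstacle, and I would prove it by analyzing how the submodules $A\cap t^iB$ sit inside $A$, translating the inequality ``number of $(u-1)$'s to the right of a column $\ge$ number of $u$'s'' into a comparison of two successive graded pieces of this intersection filtration.

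For the converse I would realize a given LR-filling by a submodule, building $B$ and its submodule $A$ one horizontal strip at a time. The engine here is a Pieri-type existence lemma: given a module $M''$ of prescribed type together with a horizontal strip of conjugates, there is an extension $0\to S\to M\to M''\to 0$ by a semisimple $S$ so that $\bigl(\type M\bigr)'$ is obtained from $\bigl(\type M''\bigr)'$ by adding that strip, and the lattice condition is precisely what guarantees that these local steps can be chosen compatibly so that the resulting submodule has type exactly $\alpha$. Alternatively, and perhaps more cleanly over an arbitrary algebraically closed field, I would pass through Hall polynomials: Macdonald's theorem identifies $c^\beta_{\alpha\gamma}$ with the leading coefficient of the Hall polynomial $g^\beta_{\alpha\gamma}(q)$ counting such submodules over $\mathbb{F}_q$, so $c^\beta_{\alpha\gamma}>0$ yields a submodule over $\mathbb{F}_q$ for all large $q$, and an upper-semicontinuity (or Lefschetz-type transfer) argument lifts its existence to $k$. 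The delicate half of the whole proof is the equivalence of the lattice condition with realizability: producing a sequence of partitions from a submodule is comparatively formal once the Pieri rule is in place, whereas showing that \emph{no} sequence exists when the lattice condition fails is where genuine control of the possible Jordan types of submodules is required.
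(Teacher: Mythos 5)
First, note what you are comparing against: the paper does not prove this statement at all. It is quoted as the Green--Klein Theorem, with a citation to Klein's paper and an immediate end-of-proof mark; the only supporting material is the paragraph that follows it, which describes how the tableau of an embedding $A=N_\alpha\hookrightarrow N_\beta=B$ is read off from the Jordan types of the quotients $B/T^\ell A$. Your forward direction reproduces exactly that construction (your $\lambda^{(i)}=\bigl(\mathrm{type}(B/t^iA)\bigr)'$ is the paper's $\gamma^{(i)}$), and your overall framework --- $k[t]$-modules, the Pieri-type fact for quotients by semisimple submodules, Hall polynomials --- is the standard literature proof of Green, Klein and Macdonald. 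So the skeleton is sound; the problem is that your write-up defers precisely the two steps that constitute the theorem.

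In the forward direction, the horizontal-strip property of the chain $\lambda^{(0)}\subseteq\cdots\subseteq\lambda^{(\alpha_1)}$ is the routine part; the lattice permutation property is the entire difficulty, and ``I would prove it by analyzing how the submodules $A\cap t^iB$ sit inside $A$'' is a statement of intent, not an argument --- nothing you wrote rules out that a submodule filtration produces a non-lattice word. In the converse, your first route asserts that ``the lattice condition is precisely what guarantees that these local steps can be chosen compatibly''; that assertion \emph{is} the theorem, and the proposal contains no proof of it. Your second route is essentially circular: Macdonald's result that $g^\beta_{\alpha\gamma}(q)$ is a polynomial with leading coefficient $c^\beta_{\alpha\gamma}$, vanishing identically if and only if $c^\beta_{\alpha\gamma}=0$, is the Green--Klein theorem in its finite-residue-field form. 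Citing it is legitimate (the paper itself only cites Klein), but then the only new mathematics left in your proof is the transfer to an arbitrary algebraically closed field $k$, which you also only name. That transfer is harmless in characteristic $p$ (take $\mathbb{F}_{p^m}\subseteq\overline{\mathbb{F}}_p\subseteq k$ and note Jordan types are stable under field extension), but in characteristic $0$ it needs an actual argument (a constructible set defined over $\mathbb{Z}$ with points over $\overline{\mathbb{F}}_p$ for infinitely many $p$ has points over $\overline{\mathbb{Q}}$), and --- worse --- the forward implication over $k$ cannot be obtained by counting at all: it requires either your unfinished filtration/lattice argument or a spreading-out specialization to a finite field, neither of which appears. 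In short: the right strategy, matching the source the paper relies on, but both load-bearing steps are missing.
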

  
  More precisely, if $A$ is the image of the embedding
  $N_\alpha\to N_\beta=B$ in the short exact sequence, then the
  tableau $X=[\gamma^{(0)},\ldots,\gamma^{(s)}]$ of the sequence is given by
  $s=\min\{\ell:T^\ell A=0\}$,
  and the transposes of the partitions $\gamma^{(\ell)}$
  are given by the Jordan types of the action of $T$ on the factors
  $B/T^\ell A\cong N_{(\gamma^{(\ell)})'}$.

  \medskip
  Together, the $k[T]$-monomorphisms in the short exact sequences 
  form a constructible subset $\mathbb V_{\alpha,\gamma}^{\beta}$
  of the affine variety $\Hom_k(N_\alpha,N_\beta)$.
  Note that each irreducible component $\overline{\mathbb V}_X$
  of $\mathbb V_{\alpha,\gamma}^{\beta}$
  is given as the closure of the set of
  sequences with corresponding LR-filling $X$.  All irreducible components
  have the same dimension.

  \begin{defin}
    Two LR-tableaux $X$, $Z$ of the same type are said to be in
    {\it boundary relation}, $Z\preccurlyeq_{\sf boundary}X$, 
    if $\mathbb V_X\cap \overline{\mathbb V}_Z\neq \emptyset$ holds.
  \end{defin}
  
  \medskip
  The following theorem is shown in \cite{ks-strips}:

  \begin{thm}\label{thm-box-bound}
    Suppose $X$, $Y$ are LR-tableaux of the same type
    and of shape which is a rook strip.
    If $Y$ is obtained from $X$ by a decreasing box move,
    then $Y\prec_{\sf boundary} X$.  \qed
  \end{thm}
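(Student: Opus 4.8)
The plan is to realize the single decreasing box move transforming $X$ into $Y$ by an explicit one-parameter family of short exact sequences which is of type $Y$ for all nonzero values of the parameter and specializes to a sequence of type $X$ at parameter $0$; producing such a family exhibits a point of $\mathbb V_X$ inside $\overline{\mathbb V}_Y$ and hence proves $Y\prec_{\sf boundary} X$.

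First I would pass from tableaux to subspaces through the Green--Klein correspondence. Fix $B=N_\beta$ with its nilpotent operator $T$ on a basis adapted to the Jordan blocks, and encode an LR-tableau of type $(\alpha',\beta',\gamma')$ by the Jordan types $(\gamma^{(\ell)})'$ of the successive quotients $B/T^\ell A$, where $A\cong N_\alpha$ is the image subspace and $B/A\cong N_\gamma$. Since the shape is a rook strip, $\beta\setminus\gamma$ has at most one box in each row and each column, so the box move swaps two entries $a<b$ lying in distinct rows and distinct columns; by the bookkeeping already used in Lemma~\ref{lem-box-to-dom}, only the intermediate partitions $\gamma^{(\ell)}$ with $a\le\ell<b$ are altered. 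I would identify the two swapped boxes with two cyclic generators of $A$, whose entry levels into the filtration are governed by $a$ and by $b$.

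Next I would write down the family $A_t$: retain all generators of $A$ except those two, and replace the pair by a $t$-dependent linear combination (a single rotation mixing them) chosen so that the modified generators have the same $T$-annihilators as before. One then checks that $A_t$ is $T$-invariant with $A_t\cong N_\alpha$ and $B/A_t\cong N_\gamma$ for every $t$, so that the corresponding monomorphism $f_t$ stays in $\mathbb V_{\alpha,\gamma}^\beta$ throughout, and that $\lim_{t\to 0}A_t=A_0$. The rook-strip hypothesis is exactly what keeps this perturbation local: because the two boxes are non-attacking, mixing their generators forces no secondary reordering among the remaining boxes.

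The crux is the tableau computation. I must verify that for $t\ne 0$ the Jordan types of $B/T^\ell A_t$ realize $Y$ while at $t=0$ they realize $X$; equivalently, that each rank $\dim T^\ell A_t$ takes its generic value for $t\ne 0$ and drops at $t=0$ by semicontinuity, in just the way that turns the partitions $\delta^{(\ell)}$ of $Y$ into the dominance-larger partitions $\gamma^{(\ell)}$ of $X$ over the range $a\le\ell<b$. I expect this Jordan-type bookkeeping across all intermediate levels to be the main obstacle, since one must confirm simultaneously that the family degenerates to $X$ and to nothing smaller; here the non-attacking property should reduce each level to a single $2\times 2$ interaction, which is where the hypothesis becomes essential. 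Once this is checked, $A_0\in\mathbb V_X\cap\overline{\mathbb V}_Y$ is nonempty and the theorem follows.
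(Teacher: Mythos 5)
Your overall strategy --- exhibit an explicit one-parameter family of embeddings whose tableau is $Y$ for $t\neq 0$ and $X$ at $t=0$, so that a point of $\mathbb V_X$ lies in $\overline{\mathbb V}_Y$ --- is exactly the strategy the paper attributes to \cite{ks-strips}; note the paper itself does not prove this theorem but cites that reference, sketching the construction as a family of short exact sequences $0\to L\to M(\lambda)\to N\to 0$ that is split at $\lambda=0$. However, the family you actually write down cannot work. If you ``retain all generators of $A$ except those two, and replace the pair by a $t$-dependent linear combination (a single rotation mixing them),'' then every generator of $A_t$ is an element of $A$, hence $A_t\subseteq A$; since you also insist $A_t\cong N_\alpha$, a dimension count forces $A_t=A$ for all $t$. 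The family is constant, its tableau is $X$ for every $t$, and no degeneration is produced. (Rotating by an automorphism $\phi$ of $B$ fails for the same reason: the tableau of $\phi(A)\subset B$ equals that of $A\subset B$.) The idea your construction is missing is that the deformation direction must stick out of $A$: one mixes a generator of $A$ with a vector $w\in B\setminus A$ such that some power $T^jw$ lies in $A$ (typically a $T$-preimage of the other involved generator). This is precisely why the cited construction places the parameter $\lambda$ in the extension data of $0\to L\to M(\lambda)\to N\to 0$ rather than in an automorphism of $L\oplus N$: a nonsplit extension records exactly a subspace that cannot be rotated back into the direct sum.

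A minimal example makes the failure concrete. Take the exact sequence $0\to N_{(2,1)}\to N_{(3,2,1)}\to N_{(2,1)}\to 0$, so the tableaux have shape $(3,2,1)\setminus(2,1)$ and content $(2,1)$ (all partitions here are self-conjugate, and the shape is a rook strip). Let $B=N_{(3,2,1)}$ have Jordan generators $b_1,b_2,b_3$ of heights $3,2,1$. The subspace $A=\langle Tb_1+b_3,\,Tb_2\rangle$ satisfies $B/A\cong N_{(2,1)}$ and $B/TA\cong N_{(2,2,1)}$, so its tableau is $X$ (reading word $1,1,2$); the decreasing box move yields $Y$ with reading word $1,2,1$. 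Any rotation of the two generators $Tb_1+b_3$ and $Tb_2$ into each other leaves $A_t=A$. The family that does work is $A_t=\langle Tb_1+b_3+tb_2,\;Tb_2\rangle$, where the height-two generator is mixed with $b_2\notin A$, a $T$-preimage of the \emph{other} generator: one checks $A_t\cong N_{(2,1)}$ and $B/A_t\cong N_{(2,1)}$ for all $t$, while $B/TA_t\cong N_{(3,1,1)}$ (tableau $Y$) for $t\neq 0$ and $B/TA_0\cong N_{(2,2,1)}$ (tableau $X$). Note also that the ``obvious'' deformation $\langle Tb_1+tb_2,\,b_3\rangle$ leaves the variety altogether (its cokernel is $N_{(3)}$ for $t\neq 0$), which shows that the step you defer --- keeping $B/A_t\cong N_\gamma$ and all quotients $B/T^\ell A_t$ under control simultaneously --- is not routine bookkeeping but the actual content of the theorem; it is carried out in \cite{ks-strips}, not in this paper.
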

  
  More precisely, given $X$, $Y$ in box relation,
  we construct a one-parameter family of
  embeddings $M(\lambda)$, and for each embedding a short exact sequence
  $0\to L\to M(\lambda)\to N\to 0$, such that the following
  properties are satisfied:
  \begin{enumerate}\item $L\oplus N$ has tableau $X$;
  \item the sequence is split exact if $\lambda=0$;
  \item $M(\lambda)$ has tableau $Y$ if $\lambda\neq 0$.
  \end{enumerate}

  Thus, the above result provides a link between the combinatorial
  relation given by box moves, the algebraic relation given by
  short exact sequences and the geometric boundary relation.
  
  \medskip
  In general, the boundary relation implies the dominance relation
  \cite{ks-strips}.
  Hence, the transitive closure $\leq_{\sf boundary}$ of the boundary
  relation $\preccurlyeq_{\sf boundary}$ is a partial order.

  \medskip
  We obtain from Theorem~\ref{thm-box-bound} the following 
chain of implications for tableaux $X$, $Z$ of the same type such that the
box relation is defined:
$$Z\leq_{\sf box}X \quad\text{implies}\quad Z\leq_{\sf boundary}X \quad
\text{implies}\quad Z\leq_{\sf dom}X$$

  \medskip
  As a consequence, Theorem~\ref{thm-main} yields the following result:

  \begin{thm}
    The following statements are equivalent for LR-tableaux 
    $X$, $Z$ of the same type and of shape which is a rook strip.
    \begin{enumerate}
    \item $Z\leq_{\sf box} X$
    \item $Z\leq_{\sf boundary}X$
    \item $Z\leq_{\sf dom}X$\qed
    \end{enumerate}
  \end{thm}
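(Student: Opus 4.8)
The plan is to establish a cycle of implications $(1)\Rightarrow(2)\Rightarrow(3)\Rightarrow(1)$, drawing entirely on results already available. Since all three relations are partial orders obtained as transitive closures (with $\leq_{\sf dom}$ being transitive to begin with), it suffices to verify each single arrow of the cycle and then invoke transitivity.

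First I would handle $(1)\Rightarrow(2)$. By Theorem~\ref{thm-box-bound}, if $Y$ is obtained from $X$ by a single decreasing box move and the shape is a rook strip, as assumed here, then $Y\prec_{\sf boundary}X$. The box order $\leq_{\sf box}$ is by definition the partial order generated by such moves, while $\leq_{\sf boundary}$ is the transitive closure of $\preccurlyeq_{\sf boundary}$. Applying Theorem~\ref{thm-box-bound} to each move in a chain witnessing $Z\leq_{\sf box}X$ and then passing to transitive closures yields $Z\leq_{\sf boundary}X$.

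Next, $(2)\Rightarrow(3)$ is immediate from the general fact, recorded in the excerpt and proved in \cite{ks-strips}, that the boundary relation implies the dominance relation; passing to the transitive closure $\leq_{\sf boundary}$ preserves this implication precisely because $\leq_{\sf dom}$ is already transitive. Finally, $(3)\Rightarrow(1)$ is nothing but the substantive content of Theorem~\ref{thm-main}: under the rook strip hypothesis, $Z\leq_{\sf dom}X$ forces $Z\leq_{\sf box}X$. This closes the cycle and establishes the equivalence of all three statements.

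The one genuinely difficult step is this last implication, the converse to Lemma~\ref{lem-box-to-dom}, whose verification constitutes the main work of the entire paper and is carried out by the two constructive algorithms promised in the introduction. Everything else in the present argument is bookkeeping with transitive closures together with citations of Theorem~\ref{thm-box-bound} and \cite{ks-strips}, so I expect no independent obstacle once Theorem~\ref{thm-main} is in hand.
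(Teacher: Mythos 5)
Your proposal is correct and follows exactly the paper's own route: the paper likewise obtains the cycle $Z\leq_{\sf box}X \Rightarrow Z\leq_{\sf boundary}X \Rightarrow Z\leq_{\sf dom}X$ from Theorem~\ref{thm-box-bound} together with the result of \cite{ks-strips} that the boundary relation implies dominance, and then closes it with Theorem~\ref{thm-main} for the rook-strip case. The bookkeeping with transitive closures you spell out is implicit in the paper's presentation, so there is no substantive difference.
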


  The case where the partition $\alpha$ has at most two parts
  is particularly well understood
  since then for each shape $\beta\setminus\gamma$,
  there are only finitely many isomorphism classes
  of short exact sequences in $\mathbb V_{\alpha,\gamma}^\beta$.
  In this situation, the boundary relation has a combinatorial description
  in terms of arc diagrams, see \cite[Theorem 1.2]{ks-hall},
  and is, in fact, transitive and equivalent
  to several algebraic and combinatorial relations, in particular to
  $\leq_{\sf box}$ and $\leq_{\sf dom}$.

%==============================================================
\section{The Bruhat order and the first proof of the main result} 
\label{section-first-proof}
%==============================================================

 \begin{notation}
   Let $S_n$ denote the symmetric group on $n$ letters.
   For any $i=1,\ldots,n-1$, denote by $s_i=(i,i+1)\in S_n$ the adjacent transposition.
 Let $x\in S_n$ and let $x=s_{i_1}s_{i_2}\cdots s_{i_k}$
 with $k$ minimal. Such an expression for $x$ is called
 {\it reduced}. The number $k$ is called the {\it length}
 of $x$ and we denote it by $\ell(x)=k$.

%For $x \in S_n$, define the {\it inversions} of $x$ to be pairs $a < b$ 
%such that $\pi(a) > \pi(b)$. 
\end{notation}

A key ingredient for our first proof of the main result is the notion
of Bruhat order on the symmetric group. An excellent reference for this
topic is \cite{bb}.
There are several ways to
define Bruhat order on $S_n$. We  define the Bruhat graph on $S_n$ by putting an edge from $u$ to $v$ if  
  $\ell(u)< \ell(v)$ and $ut=v$ for $t$ a transposition; we then define $u\leq v$ if there is a path from $u$ to $v$ in the Bruhat graph.
It follows directly from this definition that $u<v$ if and only if
$u^{-1}<v^{-1}$, using the fact that
  if $v=ut$, then $v^{-1}=u^{-1}(utu^{-1})$, where $utu^{-1}$ is also a transposition.
  We call $v$ {\it a~cover} of $u$ (and write $u\lessdot v$) if
  $u<v$ and there is no $w$ such that $u<w<v$.

The following well-known lemma characterizes the cover relations in
Bruhat order.

\begin{lem}[{\cite[Lemma 2.1.4, Theorem 2.2.6]{bb}}] \label{cover-lem}
  For $u,v$ in $S_n$, the following conditions are equivalent:
  \begin{itemize} \item $u\lessdot v$,
  \item $\ell(v)=\ell(u)+1$ and $v=ut$ for $t$ some transposition,
  \item $v=u(a,b)$, where $u(a)<u(b)$ and for $a<i<b$, we do not have
    $u(a)<u(i)<u(b)$.\end{itemize}\end{lem}

We will also need two other characterizations of Bruhat order.

\begin{thm}[{\cite[Theorem 2.2.2]{bb}}]\label{subword-th}
  For $u,v$ in $S_n$, let
  $s_{i_1}\dots s_{i_p}$ be a reduced expression for $v$. Then $u\leq v$ if
  and only if there is a subword of $s_{i_1}\dots s_{i_p}$ which is a reduced
  expression for $u$.\end{thm}

For a sequence of integers $(a_1,\dots,a_r)$, we write ${\sort}(a_1,\dots,a_r)$
for the sequence sorted into increasing order.

\begin{thm}[{\cite[Theorem 2.6.3]{bb}}] \label{bb-th}
  For $u,v\in S_n$, we have $u<v$ if and only if for all $1\leq i \leq n$,
  we have ${\sort}(u(1),\dots,u(i))$ has each entry less than or equal to
  the corresponding letter in ${\sort}(v(1),\dots,v(i))$.
\end{thm}

  Let $\alpha=(\alpha_1,\alpha_2,\dots)$ be a partition of $n$.  It will be convenient to fix once and
  for all a numbering of the boxes of $\alpha$: we number them by rows, from
  left to right, starting with the top row.  
  
\begin{notation}
  Let $T$ be a filling of $\alpha$ by the numbers 1 to $n$, with each number
  appearing once.  Define
  $\pi(T)$ to be the permutation obtained by reading
  $T$ by rows from left to right, starting from the top row, that is 
  $\pi(T)(i)$ is the number contained in box $i$.  
\end{notation}

We will be particularly interested in applying $\pi$ to SYTs, but we define
it more generally for convenience.

\begin{ex}
  Let $\alpha=(2,2,1)$.  Consider the following SYTs of shape $\alpha$:
$$ Z:\quad  
\begin{picture}(5,8)(0,3)
%\multiput(0,12)(3,0)4{\smbox}
\put(0,7){\alphambox{1}}
\put(3,7){\alphambox{4}}
\put(0,4){\alphambox{2}}
\put(3,4){\alphambox{5}}
\put(0,1){\alphambox{3}}
%\multiput(0,9)(3,0)3{\smbox}
\end{picture}
\quad, \;\;\;
X: \quad
 \begin{picture}(5,8)(0,3)
%\multiput(0,12)(3,0)4{\smbox}
\put(0,7){\alphambox{1}}
\put(3,7){\alphambox{2}}
\put(0,4){\alphambox{3}}
\put(3,4){\alphambox{4}}
\put(0,1){\alphambox{5}}
%\multiput(0,9)(3,0)3{\smbox}
 \end{picture} $$

Note that $\pi(Z)=(1,4,2,5,3)$ and $\pi(X)=(1,2,3,4,5)$.
 \end{ex}

\begin{prop}\label{prop-bruhat-dom} 
Let $X,Z$ be standard Young tableaux of shape $\alpha$.  The relation $Z \leq_{\sf dom} X$ holds 
if and only if $\pi(X) \leq\pi(Z )$ in the Bruhat order.
\end{prop}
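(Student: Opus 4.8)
The plan is to translate both orders into one system of inequalities between rank (counting) functions and then show the two systems coincide. I would start by unwinding $\leq_{\sf dom}$ combinatorially. For a filling $W$ of the common shape $\beta\setminus\gamma$ with content $\alpha$, let $N_W(i,j)$ denote the number of boxes of $W$ that lie in the first $j$ rows and carry an entry $\le i$. If $W=[\gamma^{(0)},\dots,\gamma^{(s)}]$, then the $j$-th partial sum of $\gamma^{(i)}$ equals $(\gamma_1+\cdots+\gamma_j)+N_W(i,j)$; since the first term depends only on the common shape, the definition of $\leq_{\sf dom}$ reduces to the statement that $Z\leq_{\sf dom}X$ holds if and only if $N_Z(i,j)\le N_X(i,j)$ for all $i,j$.

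Next I would connect $N_W$ to $\pi(W)$. Put $A_i=\alpha_1+\cdots+\alpha_i$, so that, by the definition of the standardization, an entry is $\le i$ precisely when its standardized value is $\le A_i$. Because $\beta\setminus\gamma$ is a rook strip, each row holds at most one box, so reading from top right to bottom left visits the boxes in strictly increasing order of their rows; the boxes in the first $j$ rows are therefore exactly the first $m_j$ letters of the reading word, where $m_j$ is the number of boxes in rows $1,\dots,j$. Writing $f_w(p,q)=\#\{k\le p:\,w_k\le q\}$ for the rank function of a permutation $w$, this identifies $N_W(i,j)=f_{\pi(W)}(m_j,A_i)$. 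Two features of $m_j$ make this decisive: it is the same for $X$ and $Z$ because the shape is common, and, since each row contributes at most one box, $m_j$ grows in steps of $0$ or $1$ and so runs through \emph{every} value in $\{0,1,\dots,n\}$. Consequently $Z\leq_{\sf dom}X$ is equivalent to $f_{\pi(Z)}(p,A_i)\le f_{\pi(X)}(p,A_i)$ for every position $p$ and every block boundary $A_i$.

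The crux is to drop the restriction $q=A_i$, and here membership in $S^\alpha$ does the work. Fix a block $i$ and an interior value $A_{i-1}+t$. Since $w=\pi(W)\in S^\alpha$, the values $A_{i-1}+1,\dots,A_i$ occur in increasing order of position, so among the first $p$ letters those with value in $(A_{i-1},A_{i-1}+t]$ form an initial segment of the block-$i$ letters already present; counting them yields the identity
$$f_w(p,A_{i-1}+t)=\min\bigl(f_w(p,A_{i-1})+t,\;f_w(p,A_i)\bigr).$$
As $\min$ is monotone in each argument, the two boundary inequalities at $A_{i-1}$ and $A_i$ force $f_{\pi(Z)}(p,q)\le f_{\pi(X)}(p,q)$ for every interior $q$ as well. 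Thus $Z\leq_{\sf dom}X$ is equivalent to $f_{\pi(Z)}(p,q)\le f_{\pi(X)}(p,q)$ for all $p,q$, and invoking the standard rank-function description of the Bruhat order (namely $u\le v$ iff $f_u(p,q)\ge f_v(p,q)$ for all $p,q$) with $u=\pi(X)$, $v=\pi(Z)$ gives $Z\leq_{\sf dom}X\iff\pi(X)\le\pi(Z)$; the strict statement follows since $\pi$ is a bijection. I expect the main obstacle to be exactly this reduction to block boundaries: the min-identity above is what carries it, and it genuinely fails without the $S^\alpha$ hypothesis, so the careful work lies in establishing that identity.
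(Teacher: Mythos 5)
Your proof is correct and follows essentially the same route as the paper: both translate $\leq_{\sf dom}$ and the Bruhat order into inequalities between counting (rank) functions via the characterization in \cite[Theorem 2.6.3]{bb}, with dominance corresponding to the counts taken at the block boundaries $A_i=\alpha_1+\cdots+\alpha_i$ and at positions $m_j$, which exhaust all positions because the shape is a rook strip. The only real difference is one of rigor: your identity $f_w(p,A_{i-1}+t)=\min\bigl(f_w(p,A_{i-1})+t,\,f_w(p,A_i)\bigr)$ for $w\in S^\alpha$ makes explicit the passage from block-boundary counts to counts at arbitrary values $q$, a step the paper's proof (in the direction dominance $\Rightarrow$ Bruhat) asserts without detail.
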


\begin{sloppypar}
\begin{proof}
 Let the sequence of partitions corresponding to $X$ be $\gamma^{(1)},\dots,\gamma^{(|\alpha|)}$, and let the sequence of partitions corresponding to
    $Z$ be $\delta^{(1)},\dots,\delta^{(|\alpha|)}$.
   Since $\pi(X)\leq \pi(Z)$ if and only if $\pi(X)^{-1} \leq \pi(Z)^{-1}$, we can
   apply the criterion of Theorem \ref{bb-th} to $\pi(X)^{-1}$ and
   $\pi(Z)^{-1}$.
    The sequence 
${\sort}(\pi(X)^{-1}(1),\dots,\pi(X)^{-1}(i))$     
 consists
of a sorted list of the positions of $1,\dots,i$ in $X$, using the numbering
of the boxes of $\alpha$ defined previously.  The condition that
${\sort}(\pi(X)^{-1}(1),\dots,\pi(X)^{-1}(i))$ have each letter less than or equal to
the corresponding letter in ${\sort}(\pi(Z)^{-1}(1),\dots,\pi(Z)^{-1}(i))$ is equivalent
to the condition that $\gamma^{(i)}\geq_{\sf dom} \delta^{(i)}$;
all these conditions put together yield exactly the condition that
$X\geq_{\sf dom}Z$. 
\end{proof}
\end{sloppypar}

Now we consider the ``box move''. We recall that a~decreasing
box move on $X$ swaps two
entries of $X$, so that the larger entry moves higher,
and the smaller entry moves lower.
We have already established, in Lemma \ref{lem-box-to-dom},
that applying a~decreasing box move moves us down in dominance order. 
We wish to prove the converse result, that if $X > Z$ in dominance
order, then there is a~sequence of decreasing box moves which takes us
from $X$ to
$Z$.
Our strategy is inductive: we find a decreasing box move from $X$ to $Y$,
such that $Y \geq Z$ in dominance order.
In order to do this, we recast the problem in terms of Bruhat order on
permutations.

% {\red
%  \begin{lem}\label{lemma3}
%    For $x \in S_n$, we have that $x \in L$ if and only if the
%    $\alpha$-recording tableau for $x$ is standard.
%  \end{lem}
% 
% \begin{proof}
% The row-increasing condition for standardness is equivalent to $x \in S^\alpha$. Suppose this is satisfied, 
% and let $X$ be the corresponding tableau. The  number of entries $\leq t$ in
% row $i$ of the recording tableau is equal to the number of boxes in $X$ labeled $i$ we have seen
% in reading the first $t$ entries of $X$. Denote this number by $\lambda^{(t)}_i$. 
% The lattice condition is then equivalent to the
% condition that for each $t$, the resulting shape $(\lambda^{(t)}_1,\lambda^{(t)}_2,\ldots)$
% is a~partition, and this condition is
% easily seen to be equivalent to column-increasingness.
% \end{proof}
% 
% 
% 
% We can also think of a~standard $\alpha$-recording tableau as the record of a~process
% of adding a~sequence of single boxes, resulting in the final shape $\alpha$, such that at
% each step, the shape is a~partition.}

\begin{lem}\label{lemma4}
If $\pi(Y)$ covers $\pi(X)$ in Bruhat order, 
  then there is a decreasing box move from $X$ to $Y$.
  
\end{lem}

\begin{proof}
A~cover in Bruhat order swaps two entries, and moves the larger number up.
\end{proof}

Therefore, our desired result follows once we prove the following
result on permutations. Let
us write  $L\subseteq S_n$ for the set of permutations of the form $\pi(X)$ for
   $X$ a SYT of shape $\alpha$.

\begin{prop}\label{prop-bruhat2}
If $x < z$ in Bruhat order, with $x, z \in L$, then there is a~cover $x\lessdot y$
with $y \in L$ such that $y\leq z$.
\end{prop}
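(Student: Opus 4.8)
The goal is to show that if $x < z$ in Bruhat order with both lying in the set $L$ of LR-filling permutations, then we can take a single Bruhat cover step $x \lessdot y$ staying inside $L$ and not overshooting $z$. The plan is to find a transposition $t = (a\ b)$ with $a < b$ such that left-multiplication $y = tx$ both covers $x$ in Bruhat order and keeps us inside $L$, i.e. keeps the $\alpha$-recording tableau standard (by Lemma~\ref{lemma3}). By Lemma~\ref{lemma2}, the covers $x \lessdot y$ correspond to choosing a pair $a < b$ with $x^{-1}(a) < x^{-1}(b)$ whose swap increases length by exactly one; so the search is for an inversion of $z$ (relative to $x$) that we can ``correct'' legally.

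First I would extract, from the hypothesis $x < z$, a concrete pair of values that are out of order in $x$ relative to $z$. Since $x \neq z$, there is some smallest value $b$ whose position in $x$ differs from its position in $z$; I would look at the pair $(a,b)$ realizing a discrepancy where $a$ sits \emph{before} $b$ in $x$ but $a$ sits \emph{after} $b$ in $z$ (this is where the Bruhat relation $x < z$, via the ``increasing rearrangement'' characterization quoted in the proof of Proposition~\ref{prop-bruhat-dom}, guarantees such a correctable inversion exists). Swapping such a pair moves the larger number $b$ up and decreases the remaining Bruhat distance to $z$, which is what secures $y \le z$. The standard lifting property (or Proposition~2.2 / Lemma~2.6-type facts about parabolic quotients) should let me choose $a, b$ so that $tx$ is again a \emph{minimal} coset representative, i.e. $y = tx \in S^\alpha$, guaranteeing the row-increasing half of standardness.

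The main obstacle I expect is the \emph{column-increasing} (equivalently, lattice/partition-shape) condition: an arbitrary legal-looking swap may produce an $\alpha$-recording tableau whose intermediate shapes $(\lambda^{(t)}_1,\lambda^{(t)}_2,\ldots)$ fail to be partitions, i.e. the lattice permutation property breaks. So the crux is to choose, among all the candidate correctable inversions $(a,b)$, one whose swap preserves standardness of the recording tableau. I would argue that if the ``obvious'' choice of $b$ (the largest value misplaced, or the one forced by comparing $x$ and $z$ entry-by-entry) would violate the partition condition, then one can instead adjust $a$ to the entry immediately governing the offending row of the recording tableau; because $z$ itself is in $L$, its recording tableau is standard, and this standardness of the \emph{target} constrains the available room so that a valid cover toward $z$ always exists. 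Concretely, I would track the effect of the swap of $a$ and $b$ on the two relevant rows of the recording tableau and verify that, with the minimal such choice, no column inequality is reversed.

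In summary, the structure is: (i) use $x < z$ and the Bruhat characterization to locate a value $b$ placed too low in $x$ relative to $z$; (ii) choose the partner $a$ minimally so that $t=(a\ b)$ gives a Bruhat cover with $y=tx \le z$ and $y \in S^\alpha$; (iii) verify via Lemma~\ref{lemma3} that the swap keeps the recording tableau standard, invoking standardness of $z$'s tableau to rule out the failure cases. Step (iii) is the delicate one and is where I anticipate spending the real effort.
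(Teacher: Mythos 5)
Your overall architecture (find a transposition $t$ so that $y=tx$ is a Bruhat cover, stays below $z$, stays in $S^\alpha$, and keeps the recording tableau standard) matches the shape of the paper's proof, but two of your steps have genuine gaps. First, your step (ii) asserts that swapping a pair $(a\ b)$ that is ``out of order in $x$ relative to $z$'' automatically ``decreases the remaining Bruhat distance'' and so secures $y\leq z$. This is false in general. Take $x=(1,3,2)$ and $z=(2,3,1)$ in $S_3$ (one-line notation), so $x<z$. The pairs $(1,2)$ and $(1,3)$ are both non-inversions of $x$ and inversions of $z$; swapping $(1,3)$ gives $y=(3,1,2)$, which covers $x$ but is \emph{not} below $z$ (both have length $2$ and are distinct). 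So the existence of a discrepancy pair does not by itself produce a cover under $z$; one must make a careful global choice. The paper gets this for free from the subword machinery: it takes a reduced word for $z$, realizes $x$ as a subword with the leftmost omitted letter as far right as possible, and adds that letter back, which by Bj\"orner--Brenti's Lemma 2.2.1 yields a cover $y$ of $x$ with $y\leq z$, and by the proof of their Theorem 2.5.5 yields $y\in S^\alpha$. Your appeal to ``the standard lifting property'' does not substitute for this, because the lifting property concerns covers of $z$, not a simultaneous choice of cover of $x$, bound $y \le z$, and parabolic-quotient membership.

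Second, step (iii) --- preserving the lattice condition --- is precisely the hard core of the proposition, and your plan for it (``adjust $a$ to the entry immediately governing the offending row \ldots standardness of $z$'s tableau constrains the available room'') is not an argument. In the paper's proof, the verification that $y\in L$ occupies most of the work: one shows the boxes $A$ (holding $a$) and $B$ (holding $b$) swap their contents, uses $\ell(y)=\ell(x)+1$ (via \cite[Lemma 2.1.4]{bb}) to see that every box added between $A$ and $B$ lies in a row above $A$ or below $B$, and then must separately exclude that $A$ and $B$ are adjacent in a row or in a column. The column-adjacent case is excluded by an argument that depends essentially on the subword construction: since $x$ and $y$ agree as words up to the reinserted letter, the permutation $(x(a)\ x(b))z$ is obtained from $z$ by deleting that \emph{same} letter, which forces $z$'s (standard) tableau to carry the larger value in $A$ above the smaller in $B$ --- a contradiction. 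Your construction of $y$ is not tied to any reduced word of $z$, so you have no analogous bridge from your chosen swap to $z$'s tableau, and ``invoking standardness of $z$'' has nothing concrete to act on. Until both of these points are repaired --- a selection rule that provably keeps $y\leq z$, and a mechanism transferring information from $z$'s tableau to the swapped boxes --- the proposal is a plausible outline rather than a proof.
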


\begin{proof}
Write a reduced expression for $z$ as a~product of adjacent transpositions $s_{i_1}\cdots s_{i_p}$.
Since $x < z$, by Theorem \ref{subword-th}, there is a~subword of this word which equals $x$. As in the proof of
\cite[Lemma 2.2.1]{bb}, choose one such that the rightmost omitted transposition is as far
to the left as possible 
(i.e. $x=s_{i_1}\cdots \widehat{s}_{i_{j_1}}\cdots \widehat{s}_{i_{j_q}}\cdots s_{i_p}$
with $j_1<\cdots <j_q$ such that $j_q$ is minimal, 
and $\widehat{s}$ means that $s$ is omitted).  
Define $y$ to be obtained from this subword by adding back in
the rightmost transposition in the word for $z$ not in the chosen subword for $x$
(i.e., $y=s_{i_1}\cdots \widehat{s}_{i_{j_{1}}}\cdots \widehat{s}_{i_{j_{q-1}}}\cdots
s_{i_{j_q}}\cdots s_{i_p}$). 
By the proof of \cite[Lemma 2.2.1]{bb}, this is a~reduced expression for
a~permutation
which covers $x$ in Bruhat order and lies below $z$.

We will now show that $y\in L$.
We can write $y=xt$, where $t$ is the transposition
$s_{i_p}s_{i_{p-1}}\dots s_{i_{j_q}} \dots s_{i_{p-1}}s_{i_p}$ (i.e.,
the product of the simple transpositions starting at the righthand end of our expression
for $z$, proceeding backwards as far as $s_{i_{j_q}}$, and then proceeding forwards
again to the end).
From this description of $t$, it is clear that
$\ell(zt)<\ell(z)$, so $zt<z$ in Bruhat order.

%We now summarize the facts that were given or that we have established.
%The rest of the proof
%will use only them, and nothing further about the specific choice of $y$ that
%we made above will be needed:
%\begin{itemize}
%\item $x$ and $z$ are in $L$, $t$ is a transposition.
%\item $y=xt$, and $y> x$ is a cover in Bruhat order.
%\item $z>zt$.
%\end{itemize}

Let $X$ and $Z$ denote the SYTs corresponding to $x$ and $z$ respectively.
Let the transposition $t$ which we determined earlier be $(a, b)$ with $a< b$.
Multiplying on the right by $t$ swaps the entries in positions
$a$ and $b$. Since the effect of this on $z$ moves us down in Bruhat order, the
larger entry must be in position $a$, and the smaller in position $b$. This
implies that the boxes numbered $a$ and $b$ must be in distinct rows of
the diagram for $\alpha$, 
since the entries of $Z$ within
any single row are increasing. The same argument implies that $a$ and $b$
lie in distinct columns. 

Now consider the effect of swapping the entries in positions $a$ and $b$ on
$x$. We know that this yields $y=xt$ which covers $x$ in Bruhat order.
Thus, the entry of $x$ in position $a$ is less than that in position $b$.
Further, by Lemma \ref{cover-lem}, there are no entries with values between $x(a)$ and $x(b)$ and
located between positions $a$ and $b$. This implies that swapping the entries
in positions $a$ and $b$ of $X$ will result in a standard tableau, so $y\in L$,
and we are done.
\end{proof}

\begin{rem}
This proof for the implication 
$X>_{\sf dom}Z\implies X>_{\sf box}Z$
is constructive as it exhibits the first box move:
Let $s_{i_1}\cdots s_{i_p}$ be a reduced expression for $\pi(Z)$.
Write $\pi(X)$ as a subword $s_{i_1}\cdots \hat s_{i_{j_1}}\cdots
\hat s_{i_{j_q}}\cdots s_{i_p}$ such that $j_q$ is minimal.
Then $s_{i_p}\cdots s_{i_{j_q}}\cdots s_{i_p}=(a,b)$
is a transposition and $\pi(X)(a, b)=\pi(Y)$ defines a~standard
  Young tableau $Y$ which satisfies $Z\leq_{\sf dom}Y$ and 
$Y<_{\sf box}X$.
\end{rem}

\begin{ex}
Consider:
  $$ Z:\quad  
\begin{picture}(5,8)(0,3)
%\multiput(0,12)(3,0)4{\smbox}
\put(0,7){\alphambox{1}}
\put(3,7){\alphambox{4}}
\put(0,4){\alphambox{2}}
\put(3,4){\alphambox{5}}
\put(0,1){\alphambox{3}}
%\multiput(0,9)(3,0)3{\smbox}
\end{picture}
\quad, \;\;\;
X: \quad
 \begin{picture}(5,8)(0,3)
%\multiput(0,12)(3,0)4{\smbox}
\put(0,7){\alphambox{1}}
\put(3,7){\alphambox{2}}
\put(0,4){\alphambox{3}}
\put(3,4){\alphambox{4}}
\put(0,1){\alphambox{5}}
%\multiput(0,9)(3,0)3{\smbox}
 \end{picture} $$

We have
$\pi(Z)=14253$, and $\pi(X)=12345$.  Let's say we want to move down from $X$.  
We write $\pi(Z)= (34)(23)(45)=s_3s_2s_4$.  
Now $\pi(X)=e$, so the subword of $s_3s_2s_4$ corresponding to $e$ is
the empty subword.  We add the rightmost reflection back in, so that is 
$(a, b)=s_4$.  
This gives us the permutation $y_1=12354$.   To get the next step down  the chain, we go to 
the subword of $z$ given by $y_1=s_4=12354$.
 Here  we right multiply $y_1$ by $s_4s_2s_4=s_2$, obtaining $13254$.
In the final step we right multiply $y_2$ by $s_4s_2s_3s_2s_4=(2,5)$, obtaining $\pi(Z)$.

The corresponding sequence of fillings is:
  $$
   \begin{picture}(5,8)(0,3)
%\multiput(0,12)(3,0)4{\smbox}
\put(0,7){\alphambox{1}}
\put(3,7){\alphambox{2}}
\put(0,4){\alphambox{3}}
\put(3,4){\alphambox{4}}
\put(0,1){\alphambox{5}}
%\multiput(0,9)(3,0)3{\smbox}
   \end{picture} \quad \raisebox{1.5mm}{$\gtrdot$} \quad
    \begin{picture}(5,8)(0,3)
%\multiput(0,12)(3,0)4{\smbox}
\put(0,7){\alphambox{1}}
\put(3,7){\alphambox{2}}
\put(0,4){\alphambox{3}}
\put(3,4){\alphambox{5}}
\put(0,1){\alphambox{4}}
%\multiput(0,9)(3,0)3{\smbox}
    \end{picture} \quad \raisebox{1.5mm}{$\gtrdot$} \quad
     \begin{picture}(5,8)(0,3)
%\multiput(0,12)(3,0)4{\smbox}
\put(0,7){\alphambox{1}}
\put(3,7){\alphambox{3}}
\put(0,4){\alphambox{2}}
\put(3,4){\alphambox{5}}
\put(0,1){\alphambox{4}}
%\multiput(0,9)(3,0)3{\smbox}
     \end{picture} \quad \raisebox{1.5mm}{$\gtrdot$} \quad
      \begin{picture}(5,8)(0,3)
%\multiput(0,12)(3,0)4{\smbox}
\put(0,7){\alphambox{1}}
\put(3,7){\alphambox{4}}
\put(0,4){\alphambox{2}}
\put(3,4){\alphambox{5}}
\put(0,1){\alphambox{3}}
%\multiput(0,9)(3,0)3{\smbox}
 \end{picture}$$

\end{ex}

%========================================================
\section{The second proof of the main result} \label{section-algorithm}
%========================================================

Let $X$, $Z$ be SYTs of the same shape.
In order to show that
the dominance relation implies the box relation
we present here a simple and explicit procedure
to determine box moves on SYTs which transform  $X$  into  $Z$.
If $Z<_{\sf dom}X$, the following steps yield an SYT $Y$ such that
$Z\leq_{\sf dom}Y$ and $Y<_{\sf box} X$.

\begin{itemize}
  \item[(1)] Find the largest entry $k$ which is in different positions
    in $X$ and $Z$.
  \item[(2)] Let $a$ be the row of the entry $k$ in $Z$;
  \item[(3)] let $m$ be the entry in that position in $X$, note that $m<k$;
  \item [(4)] let $b>a$ be the first row below $a$ which contains an
    entry in the range $m<\cdot\leq k$ in $X$;
  \item[(5)] let $\ell$ be the smallest entry with $m<\ell\leq k$
    in row $b$ of $X$.
  \item[(6)] The SYT $Y$ is obtained from $X$ by swapping the entries
    $\ell$ and $m$.
\end{itemize}

\begin{ex}
  The following SYTs $X$ and $Z$ have shape $(3,2,1)$ and are in dominance order:
   $$
  \raisebox{1.5mm}{$Z:$} \quad \alphatab 126345 \;\;\;\;\quad \raisebox{1.5mm}{$<_{\sf dom}$} \quad \;\;
  \raisebox{1.5mm}{$X:$} \quad \alphatab 123465
  $$
  In the algorithm we compute:  $k=6$, $a=1$, $m=3$, $b=2$ and $\ell=4$. Then
  $$
  \raisebox{1.5mm}{$Y:$} \quad \alphatab124365
  $$
  satisfies $Z<_{\sf dom} Y$ and we can continue.
\end{ex}

  We illustrate this algorithm using tables which
  describe the difference of the two SYTs.
    Let $X$ and $Z$ be given by partition sequences
    $[\gamma^{(1)},\ldots,\gamma^{(|\alpha|)}]$ and $[\delta^{(1)},\ldots,\delta^{(|\alpha|)}]$,
    respectively.  
  Each column of the table $T_{X-Z}$ is indexed by
  the partition number $j$, each row by the row number $i$.
  The entries are simply $(T_{X-Z})_{i,j}=\gamma_i^{(j)}-\delta_i^{(j)}$.

  In the above example, $X$ and $Z$ have partition sequences
  \begin{eqnarray*}X & : & [(1),(2),(3),(3,1),(3,1,1),(3,2,1)],\\ 
    Z & : & [(1),(2), (2,1),(2,2), (2,2,1),(3,2,1)],
  \end{eqnarray*}
  respectively.
  Here is the corresponding table.

  $$
  \begin{array}{c|c;{1pt/2pt}c;{1pt/2pt}c;{1pt/2pt}c;{1pt/2pt}c;{1pt/2pt}c}
    T_{X-Z} & \;1\; & \;2\; & \;3\; & \;4\; & \;5\; & \;6\;  \\ \hline
    1 & 0 & 0 & 1 & 1 & 1 & 0 \\ \hdashline[1pt/2pt]
    2 & 0 & 0 & -1 & -1 & -1 & 0  \\ \hdashline[1pt/2pt]
    3 & 0 & 0 & 0 & 0 & 0 & 0 
  \end{array}
  $$
  The table $T_{Y-Z}$ for the tableaux after the box move differs from $T_{X-Z}$
  in column 3 which is zero.

  \medskip
  The following lemma is obtained as a~consequence of the definition of the dominance order.

  \begin{lem}
    \label{lemma-table}
    Two SYTs $X$ and $Z$ are in dominance order, $Z\leq_{\sf dom}X$,
    if and only if
    for each column $j$ and each row $a$ in the table $T_{X-Z}$,
    the sum $\sum_{i=1}^a(T_{X-Z})_{i,j}$ of all entries in the $j$-th
    column from the top down to row $a$ is nonnegative. \qed
  \end{lem}
  
  \medskip
  We can now give the second proof of Theorem~\ref{thm-main}.

  \begin{proof}
    For two SYTs $X$, $Z$ of the same shape $\alpha$ we need to show that they are in
    dominance order if and only if they are in box order.  The ``only if'' part
    has been shown in Lemma~\ref{lem-box-to-dom}.  For the converse, assume that
    $X$ and $Z$ are in dominance order, say $Z\leq_{\sf dom} X$.  If $X$ and $Z$ are
    equal in dominace order, then $X=Z$ and we are done.

    \smallskip
    We show that in case $Z<_{\sf dom} X$, the steps described above
    will produce a SYT $Y$ such that $Z\leq_{\sf dom} Y$ and $Y<_{\sf box} X$.

    \smallskip
    Consider the table $T_{X-Z}$, we discuss the entries as steps (1) through (4)
    are being performed.
    
    $$
    \begin{array}{c|c;{1pt/2pt}c;{1pt/2pt}c;{1pt/2pt}c;{1pt/2pt}c;{1pt/2pt}c;{1pt/2pt}c;{1pt/2pt}c;{1pt/2pt}c}
      T_{X-Z} & \cdots & m & \cdots & \ell-1 & \ell & \cdots & k-1     &\; k \; & \cdots   \\ \hline
      \vdots & & \phantom{xx}   & &        & \phantom{xx}     & &  0      & 0  & \cdots \\ \hdashline[1pt/2pt]
      a   & & > & > & \cdots  & \cdots & >       & 1 & 0 & \cdots \\ \hdashline[1pt/2pt]
      \vdots & & \geq & \geq & \cdots & \cdots  & \geq & 0 & 0 & \cdots \\ \hdashline[1pt/2pt]
      b   & & & & & & & 0 & 0 & \cdots \\ \hdashline[1pt/2pt]
      \vdots & & & & & & & 0 & 0 & \cdots \\ \hdashline[1pt/2pt]
      a'  & & & &  & & & -1 & 0 & \cdots \\ \hdashline[1pt/2pt]
      \vdots & & & & & & & 0 & 0 & \cdots
    \end{array}
    $$

    \noindent{\bf (1)} Let $k$ be the largest entry which is in different positions in $X$ and $Z$.

    \smallskip
    By the choice of $k$, the partitions $\gamma^{(i)}$, $\delta^{(i)}$ are equal for $i\geq k$.
    Hence in the table, the $k$-th column, and everything on its right, is zero.

    \smallskip\noindent{\bf (2)} Let $a$ be the row in which $k$ occurs in $Z$.

    \smallskip
    The entry $k$ occurs in different rows in $X$ and $Z$, say it occurs 
    in row $a'$ in $X$.  Recall that $\delta^{(k)}=\gamma^{(k)}$, hence $\delta^{(k-1)}$ and $\gamma^{(k-1)}$
    differ only in the two rows $a$ and $a'$ where entry $k$ occurs in $Z$ and $X$, respectively.
    Then $\delta^{(k-1)}_i=\gamma^{(k-1)}_i$ for $i\neq a,a'$ and
    $$\delta^{(k-1)}_a=\gamma^{(k-1)}_a-1,\quad \delta ^{(k-1)}_{a'}=\gamma^{(k-1)}_{a'}+1.$$
    Since $Z<_{\sf dom}X$ are in dominance relation, we have $\delta^{(k-1)}<_{\sf dom}\gamma^{(k-1)}$, hence $a'>a$.

    \smallskip\noindent{\bf (3)} Let $m$ be the entry in $X$ which has the position of $k$ in $Z$.

    \smallskip
    Note that $m<k$ since all entries greater than $k$ are in the same position in $X$ and $Z$.
    Considering the $a$-th rows of the two tableaux, it follows that
    $$\gamma^{(i)}_a>\delta^{(i)}_a \quad \text{for}\quad m\leq i<k.$$
    
    \smallskip\noindent{\bf (4)} Let $b>a$ be the first row below $a$ which contains an
    entry in the range $m<\cdot\leq k$ in $X$.

    \smallskip
    Such a row $b$ exists since $a'>a$ and row $a'$ contains the entry $k$.
    Consider a row $r$ in the table $T_{X-Z}$ with $a<r<b$.  The entries greater than $k$
    are in the same positions in $X$ and in $Z$, and none of the entries
    in the range $m<\cdot\leq k$ occur in row $r$ in $X$,
    hence for each entry $s$ with $m<s<k$, $\gamma_r^{(s)}-\delta_r^{(s)}\geq 0$.
    Thus, the entries in $T_{X-Z}$ in the rectangle given by rows $m\leq\cdot<k-1$
    and columns $a<\cdot<b$ are all nonnegative.
    This finishes the proof that the given entries in the table $T_{X-Z}$ are as specified.
    
    \smallskip\noindent{\bf (5)} Let $\ell$ be the minimal entry in row $b$ of $X$
    with  $m<\ell\leq k$.

    \smallskip
    We claim that the column in $X$ containing $\ell$ is on the left of 
    the column in $X$ containing $m$.
    For this, recall that $Z$ has $k$ in the position where $X$ has $m$.
    In the quadrant underneath and to the right of $k$ in the SYT $Z$,
    all entries are greater than or equal to $k$.
    Such entries are in the same position in $X$ and $Z$, which
    implies the claim.

    \smallskip\noindent{\bf (6)} To obtain $Y$ from $X$, exchange the entries $\ell$ and $m$.

    \smallskip
    We show that $Y$ is a~SYT. By the choice of $\ell$, the entries in  row $b$ of $Y$ are strictly
    increasing; by the choice of $k$ and $m$, the entries in row $a$ of $Y$ are strictly increasing.

    Now, consider the column of $Y$ that contains the entry $m$. Since $m<\ell$ and $X$ is a~SYT,
    all entries in rows of higher numbers are bigger than $m$.
    Assume that there exists an entry $n>m$ in the column of $m$ and above $m$.
    By the choice of $b$ (minimality),
    it follows that $n$ is in row $a$ or above.  This is a~contradiction, because $X$ is a~SYT. 

    Regarding the column of $Y$ that contains the entry $\ell$, recall that 
    this entry is in the position where $X$ has $m$ and $Z$ has $k$.
    Since both $X$ and $Z$ are SYTs, the entries in this column in $Y$ must be strictly increasing.
    This finishes the proof that $Y$ is a~SYT, and, using the claim under (5),
    that $Y$ is obtained from $X$ by a decreasing box move.

    It remains to prove that $Z\leq_{\sf dom} Y$.  For this, we first verify that the
    table $T_{X-Y}$ for the tableaux $X$ and $Y$ has only zero entries, except in the positions
    indicated.  This is straightforward since $Y$ is obtained from $X$ by a box move which
    exchanges entries $\ell$ and $m$ in rows $a$ and $b$.
    $$
    \begin{array}{c|c;{1pt/2pt}c;{1pt/2pt}c;{1pt/2pt}c;{1pt/2pt}c;{1pt/2pt}c}
      T_{X-Y} & \cdots & m & \cdots & \ell-1 & \ell & \cdots \\ \hline
      \vdots & & \phantom{xx}   & &        & \phantom{xx}     &  \\ \hdashline[1pt/2pt]
      a   & & 1 & \cdots & 1 &  &        \\ \hdashline[1pt/2pt]
      \vdots & & & & & &   \\ \hdashline[1pt/2pt]
      b   & & -1 & \cdots & -1  & & \\ \hdashline[1pt/2pt]
      \vdots & & & & & \phantom 0 & 
    \end{array}
    $$
    The table $T_{Y-Z}$ is obtained by subtracting the entries in $T_{X-Y}$
    from those in $T_{X-Z}$.
    Since $X$ is a~SYT, the columns for $T_{X-Z}$ have nonnegative partial sums, as specified in
    Lemma~\ref{lemma-table}.
    Using the particular form of $T_{X-Z}$ established in steps (1) through (4), it follows that also the
    columns for $T_{Y-Z}$ have nonnegative partial sums.  Applying
    Lemma~\ref{lemma-table} again yields $Y\geq_{\sf dom} Z$.

    \smallskip
    We have seen that $Y$ is obtained from $X$ by a decreasing box move.
    By repeating this process, starting with the SYTs $Y$ and $Z$, we produce in finitely many
    rounds the desired sequence of box moves.
  \end{proof}

%===================================================================
\section{Combinatorial properties of the order $\leq_{\sf box}$}
%===================================================================
\label{sec-prop}

In this section we study combinatorial properties of the
poset $(\mathcal T_\alpha,\leq_{\sf box})$,
  and discuss in Section~\ref{section-applic} an application to invariant subspaces
  of nilpotent linear operators.

\subsection{ An example and some properties of $\leq_{\sf box}$}

Here is the poset $(\mathcal T_{(3,2,1)},\leq_{\sf box})$.

$$
\beginpicture\setcoordinatesystem units <1cm,1.5cm>
\put{\alphatab 123456} at 10 6
\put{\alphatab 124365} at 10 4
\put{\alphatab 134265} at 10 3
\put{\alphatab 135264} at 10 2
\put{\alphatab 146253} at 10 0
\put{\alphatab 145263} at 12 1
\put{\alphatab 125364} at 12 3
%\putrectangle corners at 11.7 4.5 and 12.7 5.3
%\putrectangle corners at 3.7 2.5 and 4.7 3.3
\put{\alphatab 123465} at 12 5
\put{\alphatab 136254} at 8 1
\put{\alphatab 126354} at 8 2
\put{\alphatab 134256} at 8 4
\put{\alphatab 124356} at 8 5
\put{\alphatab 125346} at 6 4
\put{\alphatab 135246} at 6 3
\put{\alphatab 136245} at 6 2
\put{\alphatab 126345} at 4 3
\arr{10 2.3}{10 2.5}
\arr{10 3.3}{10 3.5}
\arr{8 4.3}{8 4.5}
\arr{8 1.3}{8 1.5}
\arr{6 2.3}{6 2.5}
\arr{6 3.3}{6 3.5}
\arr{10.8 0.3}{11.5 0.65}
\arr{11.5 1.25}{10.7 1.7}
\arr{10.8 2.3}{11.5 2.65}
\arr{11.5 3.25}{10.7 3.7}
\arr{10.8 4.3}{11.5 4.65}
\arr{11.5 5.25}{10.7 5.7}
\arr{8.8 1.3}{9.5 1.65}
\arr{9.5 0.25}{8.7 0.7}
\arr{8.8 5.3}{9.5 5.65}
\arr{9.5 4.25}{8.7 4.7}
\arr{9.5 3.25}{8.7 3.7}
\arr{7.5 1.25}{6.7 1.7}
\arr{6.8 4.3}{7.5 4.65}
\arr{6.8 3.3}{7.5 3.65}
\arr{11.3 3.17}{6.6 3.67}

\arr{8.8 2.1}{11.4 2.75}
\arr{6.8 2.1}{9.4 2.75}
\arr{7.4 2.1}{4.9 2.7}
\arr{9.4 2.1}{6.8 2.75}
\arr{5.5 2.25}{4.8 2.65}
\arr{4.8 3.3}{5.5 3.65}
\arr{4.9 3.2}{9.5 3.7}
\endpicture
$$

  We observe:
  \begin{itemize}
  \item in this poset there exists exactly one maximal and exactly one minimal element;
  \item all saturated chains have the same length;
  \item this poset is not a~lattice.
  \end{itemize}

  The first two properties hold for each
poset of the form  $(\mathcal T_\alpha, \leq_{\sf box})$.
First, we verify that the poset has a unique maximal and a unique
minimal element.
  
%    We will now verify that the first two properties extend
%     to all $\mathcal (T_{\alpha},\leq_{\sf box})$.
%---------------------------------------------------------------
%\subsection{Maximal and minimal elements} 
%---------------------------------------------------------------
%
\label{ex-vertical-strip-necessary}
%
%   First, we verify that for each partition $\alpha$,
%   the poset $(\mathcal T_{\alpha},\leq_{\sf box})$ has a unique maximal and a unique
%   minimal element.

  \begin{lem}Let $\alpha$ be a partition.
    \label{lemma-minmax}
    \begin{enumerate}
    \item There is a unique maximal element $X$ in the poset $(\mathcal T_\alpha,\leq_{\sf box})$.
      The SYT for $X$ is given by writing the numbers $1,\ldots,|\alpha|$ in  numerical
      order into the Young diagram for $\alpha$.
    \item There is a unique minimal element $Z$ in the poset $(\mathcal T_\alpha,\leq_{\sf box})$.
      The SYT for $Z$ is the transpose of the Young diagram for $\alpha'$ in which the numbers
      $1,\ldots,|\alpha|$ are entered in  numerical order.
    \end{enumerate}
  \end{lem}

  \begin{proof}
    1. If two numbers $i, i+1$ do not occur in  numerical order in the SYT for $X$,
    then the box move exchanging $i$ with $i+1$ yields a SYT
    which is larger in the dominance order.

    2.  By definition, taking transposes is an order-reversing bijection between the posets
    $(\mathcal T_\alpha,\leq_{\sf box})$ and $(\mathcal T_{\alpha'},\leq_{\sf box})$.
  \end{proof}

%-------------------------------------------------------------------------------
% \subsection{Saturated chains}
%-------------------------------------------------------------------------------

Next, we verify that all saturated chains in these posets have the same length.
  
  \begin{prop}\sloppypar
    For a partition $\alpha$, all saturated chains in the poset
    $(\mathcal T_\alpha,\leq_{\sf box})$ have the same length.
  \end{prop}

\begin{proof}
  Since  covers in the Bruhat order increase
  the length by $1$ (Lemma \ref{cover-lem}), the result follows from
  Theorem \ref{thm-main}, Proposition \ref{prop-bruhat-dom}, Lemma \ref{lemma4}
  and Proposition \ref{prop-bruhat2}.
\end{proof}

%--------------------------------------------------------------------------------
\subsection{An application to invariant subspaces}
%--------------------------------------------------------------------------------
\label{section-applic}

We return to the link between LR-tableaux and varieties of invariant subspaces of
nilpotent linear operators studied in Section~\ref{sec-motivation}.

\medskip
It follows from Lemma~\ref{lemma-minmax} that the poset  $(\mathcal T_{\alpha,\gamma}^\beta,
\leq_{\sf dom})$ has a unique minimal and a unique maximal element provided the
partitions $\alpha$, $\beta$, $\gamma$ are such that $\beta\setminus\gamma$ is a rook strip.
This extends the corresponding result \cite[Proposition~5.5]{ks-survey} for
the case where $\beta$ and $\gamma$ are arbitrary but all parts of $\alpha$ are at most~2.  

\medskip
In terms of varieties of invariant subspaces, 
Lemma~\ref{lemma-minmax} has the following interpretation.

\begin{cor}
  Given partitions $\alpha$, $\beta$, $\gamma$ such that $\beta\setminus\gamma$
  is a rook strip, the variety
  $$\mathbb V_{\alpha,\gamma}^\beta=\bigcup_{X\in\mathcal T_{\alpha,\gamma}^\beta}\mathbb V_X$$
  has a unique open component $\mathbb V_O$ and a unique closed
  component $\mathbb V_C$.
  The open component $\mathbb V_O$ and the closed component $\mathbb V_C$
  are given by the LR-tableaux $O$ and $C$, respectively,
  which correspond to the
  unique minimal and the unique maximal element in the poset
  $(\mathcal T_{\alpha,\gamma}^\beta,\leq_{\sf dom})$.
\end{cor}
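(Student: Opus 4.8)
The plan is to reduce this geometric statement to the combinatorial uniqueness result just established, via the dictionary between the boundary relation and the partial orders. First I would recall that by Theorem~\ref{thm-main} together with Theorem~\ref{thm-box-bound} and the chain of implications recorded in Section~\ref{sec-motivation}, the three relations $\leq_{\sf box}$, $\leq_{\sf boundary}$ and $\leq_{\sf dom}$ all coincide on $\mathcal T_{\alpha,\gamma}^\beta$ when $\beta\setminus\gamma$ is a rook strip. Consequently the preceding Lemma, which produces a unique maximal element $C$ and a unique minimal element $O$ in $(\mathcal T_{\alpha,\gamma}^\beta,\leq_{\sf box})$, transfers verbatim to $(\mathcal T_{\alpha,\gamma}^\beta,\leq_{\sf boundary})$ and to $(\mathcal T_{\alpha,\gamma}^\beta,\leq_{\sf dom})$.

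Next I would set up the translation between the topology of $V:=\overline{\mathbb V_{\alpha,\gamma}^\beta}=\bigcup_X\overline{\mathbb V}_X$ and the order. The decomposition is into irreducible components $\overline{\mathbb V}_X$, all of the same dimension, and the locally closed pieces $\mathbb V_X$ are pairwise disjoint since each short exact sequence has a single LR-filling. I claim that $\mathbb V_X$ is open in $\mathbb V_{\alpha,\gamma}^\beta$ exactly when $X$ is minimal for $\leq_{\sf boundary}$, and closed exactly when $X$ is maximal. For the open direction, $\mathbb V_X$ is open iff it is disjoint from every $\overline{\mathbb V}_Z$ with $Z\neq X$, which by the definition of $\preccurlyeq_{\sf boundary}$ says precisely that no $Z\neq X$ satisfies $Z\preccurlyeq_{\sf boundary}X$. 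For the closed direction, $\mathbb V_C$ is closed iff $\overline{\mathbb V}_C\cap\mathbb V_{\alpha,\gamma}^\beta=\mathbb V_C$, i.e.\ iff no $Z\neq C$ has $\mathbb V_Z\cap\overline{\mathbb V}_C\neq\emptyset$, which says exactly that no $Z\neq C$ satisfies $C\preccurlyeq_{\sf boundary}Z$. The equal-dimension hypothesis is what I expect to be the crux here: it guarantees that each intersection $\mathbb V_X\cap\overline{\mathbb V}_Z$ with $X\neq Z$ is a proper, hence lower-dimensional, closed subset of $\overline{\mathbb V}_X$, so that a point of such an intersection really is a non-interior point of $\mathbb V_X$ (generic nearby points of $\mathbb V_Z$ escape $\overline{\mathbb V}_X$), making openness genuinely equivalent to the disjointness condition rather than merely implied by it.

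Finally I would promote these ``one-step'' conditions to honest order statements. Since $\leq_{\sf boundary}$ is by definition the transitive closure of $\preccurlyeq_{\sf boundary}$, an element $X$ fails to be minimal for $\leq_{\sf boundary}$ iff some $W<_{\sf boundary}X$; taking the last step of a witnessing chain yields a direct relation $Z\preccurlyeq_{\sf boundary}X$ with $Z\neq X$, that is, $\mathbb V_X\cap\overline{\mathbb V}_Z\neq\emptyset$. Thus minimality for $\leq_{\sf boundary}$ is equivalent to the disjointness condition above, and dually maximality corresponds to closedness. Combining this with the uniqueness from the preceding Lemma, $O$ is the unique $\leq_{\sf boundary}$-minimal tableau and hence gives the unique open component $\mathbb V_O$, while $C$ is the unique $\leq_{\sf boundary}$-maximal tableau and hence gives the unique closed component $\mathbb V_C$. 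Since $\leq_{\sf boundary}=\leq_{\sf dom}$ in the rook strip case, $O$ and $C$ are exactly the unique minimal and maximal elements of $(\mathcal T_{\alpha,\gamma}^\beta,\leq_{\sf dom})$, which is the claimed identification.
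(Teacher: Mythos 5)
Your proposal is correct and follows essentially the same route as the paper: translate openness/closedness of a stratum $\mathbb V_X$ into the non-existence of one-step boundary relations $Z\preccurlyeq_{\sf boundary}X$ (resp.\ $X\preccurlyeq_{\sf boundary}Z$), pass to the order $\leq_{\sf boundary}$, and invoke the coincidence of the boundary and dominance orders in the rook-strip case together with the uniqueness of the minimal and maximal elements from the preceding lemma. The only superfluous ingredient is your appeal to the equal-dimension property: since the strata $\mathbb V_X$ are pairwise disjoint, a point of $\mathbb V_X\cap\overline{\mathbb V}_Z$ already fails to be interior to $\mathbb V_X$ for purely point-set reasons, so no dimension count is needed.
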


\begin{proof}
  The following statements are equivalent for an LR-tableau $O\in
  \mathcal T_{\alpha,\gamma}^\beta$:
  (1)  $O$ is a minimal element in the poset
      $(\mathcal T_{\alpha,\gamma}^\beta,\leq_{\sf dom})$;
  (2) $O$ is a minimal element in the poset
      $(\mathcal T_{\alpha,\gamma}^\beta,\leq_{\sf boundary})$;
  (3)  there is no $X\in\mathcal T_{\alpha,\gamma}^\beta\setminus\{O\}$
      such that $\mathbb V_O\cap \overline{\mathbb V}_X\neq \emptyset$;
  (4) the union $\cup_{X\neq O}\mathbb V_X$ is closed in
      $\mathbb V_{\alpha,\gamma}^\beta$;
  (5) the variety $\mathbb V_O$ is open in $\mathbb V_{\alpha,\gamma}^\beta$.
      Hence, since the poset $(\mathcal T_{\alpha,\gamma}^\beta,\leq_{\sf dom})$
      has $O$ as the unique minimal element, the component
      $\mathbb V_O$ is the unique open component
      in the decomposition in the lemma.

      Similarly, the following statements are equivalent for $C\in 
  \mathcal T_{\alpha,\gamma}^\beta$:
  (1)  $C$ is a maximal element in the poset
      $(\mathcal T_{\alpha,\gamma}^\beta,\leq_{\sf dom})$;
  (2) $C$ is a maximal element in the poset
      $(\mathcal T_{\alpha,\gamma}^\beta,\leq_{\sf boundary})$;
  (3)  there is no $X\in\mathcal T_{\alpha,\gamma}^\beta\setminus\{C\}$
      such that $\mathbb V_X\cap \overline{\mathbb V}_C\neq \emptyset$;
  (4) the variety $\mathbb V_C$ is closed in $\mathbb V_{\alpha,\gamma}^\beta$.
     The uniqueness of the maximal element $C$ in 
     $(\mathcal T_{\alpha,\gamma}^\beta,\leq_{\sf dom})$ implies that $\mathbb V_C$
     is the unique closed component in the decomposition.
\end{proof}

 We now consider some examples in which we relax the condition
  that $\beta\setminus \gamma$ is a rook strip. We note that in this case,
  the definition of box moves is modified to allow resorting the
  rows (if $\beta\setminus\gamma$ is a horizontal strip) or the columns
  (if $\beta\setminus\gamma$ is a vertical strip).
%   , though this does not affect the examples below, {\blue see \cite[page 959]{ks-strips}}. {\color{red} ADD A REFERENCE FOR THIS?}

 \begin{ex} 
 \begin{enumerate} 
            
            \item 
              The first example shows that the condition that $\beta\setminus\gamma$ be a~horizontal strip is needed for the uniqueness of the maximal element in
                $\mathcal T_{\alpha,\gamma}^\beta$.
                Consider the partition triple $\beta=(4,3,2,2,1)$, $\gamma=(3,2,1,1)$ and $\alpha=(2,2,1)$.
                The Hasse diagram of the poset $(\mathcal{T}^\beta_{\alpha,\gamma},\leq_{\sf dom})$
                 (or equivalently of $(\mathcal{T}^\beta_{\alpha,\gamma},\leq_{\sf box})$) has the following shape:
  
 $$
 \begin{picture}(80,40)(0,-5)
   \put(7,30){\setcounter{boxsize}{3}
     \begin{picture}(18,12)(0,6)
       \multiput(0,12)(3,0)4{\smbox}
       \put(9,12){\alphambox{1}}
       \multiput(0,9)(3,0)3{\smbox}
       \put(6,9){\alphambox{1}}
       \multiput(0,6)(3,0)2{\smbox}
       \put(3,6){\alphambox{2}}
       \put(0,3){\smbox}
       \put(3,3){\alphambox{3}}
       \put(0,0){\alphambox{2}}
   \end{picture}}
   \put(60,30){\setcounter{boxsize}{3}
     \begin{picture}(18,12)(0,6)
       \multiput(0,12)(3,0)4{\smbox}
       \put(9,12){\alphambox{1}}
       \multiput(0,9)(3,0)3{\smbox}
       \put(6,9){\alphambox{2}}
       \multiput(0,6)(3,0)2{\smbox}
       \put(3,6){\alphambox{1}}
       \put(0,3){\smbox}
       \put(3,3){\alphambox{2}}
       \put(0,0){\alphambox{3}}
   \end{picture}}
   \put(30,0){\setcounter{boxsize}{3}
     \begin{picture}(18,12)(0,6)
       \multiput(0,12)(3,0)4{\smbox}
       \put(9,12){\alphambox{1}}
       \multiput(0,9)(3,0)3{\smbox}
       \put(6,9){\alphambox{2}}
       \multiput(0,6)(3,0)2{\smbox}
       \put(3,6){\alphambox{1}}
       \put(0,3){\smbox}
       \put(3,3){\alphambox{3}}
       \put(0,0){\alphambox{2}}
     \end{picture}
   }
   \put(45,10){\vector(1,1){12}}
   \put(30,10){\vector(-1,1){12}}
 \end{picture}
 $$ 
 
 \item 
     The second example shows that in Theorem \ref{thm-main},
     the condition that $\beta\setminus\gamma$ be a~vertical strip
     is necessary.  
     We also see that for horizontal strips, the poset $(\mathcal T_{\alpha,\gamma}^\beta,\leq_{\sf box})$ 
     may have several minimal and several maximal elements.

   Let $\beta=(5,4,3,1)$, $\gamma=(4,3,2,1)$ and $\alpha=(2,2,1)$. There are two
 LR-fillings of type $(\alpha,\beta,\gamma)$: 
 
$$ \setcounter{boxsize}{3}
\begin{picture}(18,12)(0,2)
\multiput(0,12)(3,0)5{\smbox}
\put(12,12){\alphambox{1}}
\multiput(0,9)(3,0)4{\smbox}
\put(9,9){\alphambox{2}}
\multiput(0,6)(3,0)3{\smbox}
\put(3,6){\alphambox{1}}
\put(6,6){\alphambox{3}}
\put(0,3){\alphambox{2}}
\end{picture}
\;\;\;\;\;\;\;\;
\setcounter{boxsize}{3}
\begin{picture}(18,12)(0,2)
\multiput(0,12)(3,0)5{\smbox}
\put(12,12){\alphambox{1}}
\multiput(0,9)(3,0)4{\smbox}
\put(9,9){\alphambox{1}}
\multiput(0,6)(3,0)3{\smbox}
\put(3,6){\alphambox{2}}
\put(6,6){\alphambox{2}}
\put(0,3){\alphambox{3}}
\end{picture}
$$
 They are incomparable in $\leq_{\sf box}$ relation, but
$$ \setcounter{boxsize}{3}
\begin{picture}(18,12)(0,6)
\multiput(0,12)(3,0)5{\smbox}
\put(12,12){\alphambox{1}}
\multiput(0,9)(3,0)4{\smbox}
\put(9,9){\alphambox{2}}
\multiput(0,6)(3,0)3{\smbox}
\put(3,6){\alphambox{1}}
\put(6,6){\alphambox{3}}
\put(0,3){\alphambox{2}}
\end{picture}
\leq_{\sf dom}\;\;
\setcounter{boxsize}{3}
\begin{picture}(18,12)(0,6)
\multiput(0,12)(3,0)5{\smbox}
\put(12,12){\alphambox{1}}
\multiput(0,9)(3,0)4{\smbox}
\put(9,9){\alphambox{1}}
\multiput(0,6)(3,0)3{\smbox}
\put(3,6){\alphambox{2}}
\put(6,6){\alphambox{2}}
\put(0,3){\alphambox{3}}
\end{picture}
$$
            \end{enumerate}
 \end{ex}

%\begin{blue}
%============================================================================
\subsection{Open  Question} \label{section-conjecture}
%============================================================================

%\begin{red}
%  MS: I suggest to consider to rename the section as ``Open Question''
%  and make it a subsection (number 6.5).
%  Perhaps call the ``Conjecture'' below a ``Question''.
%\end{red}

We want to compare the two algorithms presented above.  Suppose we have
  two SYTs $X,Z$ of shape $\alpha$,
  such that $X >_{\sf dom} Z$.
    Say the first algorithm produces a SYT $\tilde Y$, the second a SYT
    $Y$, both of shape $\alpha$.  The tableaux satisfy
    $$X>_{\sf dom}\tilde Y\geq_{\sf dom}Z,\qquad X>_{\sf dom} Y\geq_{\sf dom}Z.$$

 The first  algorithm depends on
the choice 
of a factorization of $z$ as a product of adjacent transpositions.
 In this section, we describe 
a way to make the choice so that (conjecturally) the two algorithms
produce the same result, i.e., so that $Y=\tilde Y$.
Suppose that to produce the reduced expression for $z\in S_n$, a version of the ``bubble
sort'' algorithm is used, as in \cite[Example 3.4.3]{bb}.

Starting from the permutation $z$, we will find a sequence of adjacent
transpositions which transform $z$ into the identity permutation.
Start by moving $n$ into the final
position. If $n$ started in position $u$, this gives us the permutation
$z \cdot s_u \cdots s_{n-1} = z \cdot z_{(n)}^{-1}$. Next, move entry $n-1$ into its
proper position, which gives us $z \cdot z_{(n)}^{-1} \cdot s_{u'}\cdots s_{n-2} = z \cdot z_{(n)}^{-1} \cdot z_{(n-1)}^{-1}$, where $u'$ was the position of $n-1$ in
$z \cdot z_{(n)}^{-1}$. Continue with the entry $n-2$ and so forth. We
obtain $e=z \cdot z_{(n)}^{-1}\cdots z_{(2)}^{-1}$. In other words,
$z_{(2)}\cdots z_{(n)}$ is a reduced expression for $z$.

\begin{conj}
   If we apply the first algorithm with the  choice of reduced expression
   described above, its result agrees with the result of applying the second
   algorithm. 
 \end{conj}

%\end{blue}

  \subsection*{Acknowledgements}

  We are very grateful to the referees, whose suggestions improved the
  exposition of the paper, and significantly simplified the proof of
  Proposition \ref{prop-bruhat2}.

%\pagebreak[3]
%============================================================================

  \bigskip
Addresses of the authors:

\parbox[t]{5.5cm}{\footnotesize\begin{center}
              Faculty of Mathematics\\
              and Computer Science\\
              Nicolaus Copernicus University\\
              ul.\ Chopina 12/18\\
              87-100 Toru\'n, Poland\end{center}}
\parbox[t]{5.5cm}{\footnotesize\begin{center}
              Department of\\
              Mathematical Sciences\\ 
              Florida Atlantic University\\
              777 Glades Road\\
              Boca Raton, Florida 33431\end{center}}

\parbox[t]{5.5cm}{\centerline{\footnotesize\tt justus@mat.umk.pl}}
           \parbox[t]{5.5cm}{\centerline{\footnotesize\tt markus@math.fau.edu}}

\parbox[t]{5.5cm}{\footnotesize\begin{center}
              LaCIM\\
              D\'epartement de math\'ematiques\\ 
              UQAM\\
              C.P 8888, succursalle Centre-ville\\
              PK-5151\\
              Montr\'eal, QC H3C 3P8\end{center}}

\parbox[t]{5.5cm}{\centerline{\footnotesize\tt hugh.ross.thomas@gmail.com}}

\end{document}